\documentclass[final]{siamltex}
\usepackage{graphicx}
\usepackage{amsmath,amssymb,amsfonts,bm}    
\usepackage{graphics}                           
\usepackage{color}                              
\usepackage{dsfont}
\usepackage{hyperref}                           
\usepackage{multirow}                           
\usepackage{graphicx}                           
\usepackage{amscd}                              
\usepackage{mathrsfs}
\usepackage{makeidx}
\usepackage{subfigure}                          
\usepackage{srcltx}                             %
\usepackage{fancyhdr}
\usepackage{float}
\usepackage{pgf}
\usepackage{tikz}
\usetikzlibrary{decorations.pathmorphing}
\usetikzlibrary{fadings}
\usepackage{pgfkeys} 
\usepackage{ifthen}
\usepackage{xargs,cleveref,enumerate}

\usepackage[normalem]{ulem} 

\DeclareMathOperator*{\diver}{\mathrm{div}}

\newcommandx{\yaHelper}[2][1=\empty]{%
\ifthenelse{\equal{#1}{\empty}}%
  { \ensuremath{ \scriptstyle{ #2 } } } 
  { \raisebox{ #1 }[0pt][0pt]{ \ensuremath{ \scriptstyle{ #2 } } } }  
}   

\newcommandx{\yrightarrow}[4][1=\empty, 2=\empty, 4=\empty, usedefault=@]{%
  \ifthenelse{\equal{#2}{\empty}}
  { \xrightarrow{ \protect{ \yaHelper[ #4 ]{ #3 } } } } 
  { \xrightarrow[ \protect{ \yaHelper[ #2 ]{ #1 } } ]{ \protect{ \yaHelper[ #4 ]{ #3 } } } } 
}

\newcommand{\Mosco}{\hspace{-.05cm}\yrightarrow{\scriptscriptstyle \mathrm{M}}[-1pt]\hspace{-.05cm}}

\newtheorem{thm}{Theorem}

\newtheorem{cor*}{Corollary}

\newtheorem{ex}{Example}[section]

\newtheorem{assumption}{Assumption}
 
 \newcommand{\dif}{\:\mathrm{d}}
\newcommand{\diam}{\:\mathrm{diam}}
\newcommand{\K}{\mathbf{K}}
\newcommand{\R}{\mathbb{R}}
\newcommand{\m}{\mathcal}
\newcommand{\fa}{\forall\:}
\newcommand{\hdiv}{H(\Omega;\mathrm{div})}
\newcommand{\qed}{\;\;\ensuremath{\endproof}}

\title{On Some Quasi-Variational Inequalities and other problems with moving sets \thanks{C. N. R. was supported by NSF grant DMS-2012391, and  acknowledges the support of Germany's Excellence Strategy - The Berlin Mathematics Research Center MATH+ (EXC-2046/1, project ID: 390685689) within project AA4-3.}  
}
 
\author{  Jos\'{e}-Luis Menaldi\thanks{Department of Mathematics, Wayne State University, Detroit, MI 48202, USA. ({\tt menaldi@wayne.edu})} \and  Carlos N. Rautenberg\thanks{Department of Mathematical Sciences and the Center for Mathematics and Artificial Intelligence (CMAI), George Mason University, Fairfax, VA 22030, USA. ({\tt crautenb@gmu.edu})
}
    }

\begin{document}

\maketitle
\begin{abstract}
Since its introduction over 50 years ago, the concept of Mosco convergence has permeated through diverse areas of mathematics and applied sciences. These include applied analysis, the theory of partial differential equations, numerical analysis, and infinite dimensional constrained optimization, among others. In this paper we explore some of the consequences of Mosco convergence on applied problems that involve moving sets, with some historical accounts, and modern trends and features. In particular, we focus on connections with density of convex intersections, finite element approximations, quasi-variational inequalities, and impulse problems.

\end{abstract}
\begin{center}
\textbf{On the occasion of the 80th birthday of Umberto Mosco}
\end{center}

\begin{keywords}
Mosco convergence, Variational inequalities, Quasi-variational inequalities.
\end{keywords}

\begin{AMS}
35J86, 35J60, 35R35, 65K10, 93E20
\end{AMS}

\pagestyle{myheadings}
\thispagestyle{plain}

\section{Introduction}
The overwhelming success of Mosco convergence \cite{mosco1969convergence,mosco1967approximation} is present in several areas of mathematics. The concept provided the right framework for the study of problems involving \emph{moving} convex sets in reflexive Banach spaces. In fact, it made possible to study perturbation/stability properties for variational inequalities and other nonlinear problems in calculus of variations, provided existence results for quasi-variational inequalities (QVIs), and allowed the study of optimization problems where control/design variables modify constraints. 

Historically, set convergence notions in abstract spaces go back to Painlev\'{e} in the beginning of the nineteenth century. They were, however, popularized by Kuratowski \cite{kuratowski1948topologie} who remains as part of the name in the set limit names.  In between the appearance of the latter, and the one of the seminal {paper by} Di Giorgi and Franzoni \cite{de1975tipo} on $\Gamma$-convergence (a full analysis of relationships between $K-$ and $\Gamma-$limits can be found in the monograph {by} Dal Maso \cite{d93}), Mosco published his foundational results \cite{mosco1969convergence,mosco1967approximation} associated to the convergence of sets.  The initial results of Mosco are almost simultaneous to the famous Lions and Stampacchia paper \emph{Variational Inequalities} \cite{Lions1967}. This is not coincidence, as in Mosco's own words, it was Stampacchia ``who suggested this research''. Further, we direct the interested reader to Aubin and Frankowska \cite{Aubin2009} for an historical account on the notions of set convergence.

There is a vast literature on \emph{Impulse Control Problems} and their 
connections with QVIs, the reader may check the books {by} Bensoussan and 
Lions~\cite{Bensoussan1984} and Bensoussan~\cite{Ben1982} {for a self-contained account on the subject}. In fact, QVIs were initially identified and treated by Bensoussan and Lions \cite{Bensoussan1974,Lions1973} through impulse control problems. Hamilton-Jacobi-Bellman equations take the form of QVIs in many applications to (stochastic) control/design problems.  Thus, the convergence of sets (which is relevant in the setting of the problems itself) becomes very important in the approximation and implementation of these models.  
A very short description is given later on, essentially, to alert the reader some other similar (yet different) type of QVIs, where Mosco's convergence has not been completely discussed and {explored}.

From a general perspective, QVIs are nonlinear, nonconvex, and nonsmooth problems with (in general) non-unique solutions. Specifically, a QVI can be seen as variational problem with an implicit (state dependent) constraint. { This leads} to the need to approach these problems from the moving set perspective. This level of complexity established QVIs as powerful physical models. It should be noted that QVIs have been successfully applied to the magnetization of superconductors, Maxwell systems, thermohydraulics, image processing, game theory, surface growth of granular {(cohensionless)} materials, hydrology, and solid and continuum mechanics. {For more details,} we refer the reader to \cite{MR3082853,MR2509561,Duvaut,Harker,Rodrigues2000,Lions1975,Pang,Prigozhin} and {to } the monographs \cite{BaiC1984,kravchuk2007variational}.  

In this paper, in addition to providing an account of some basic sufficient conditions for Mosco convergence for several types of sets, we focus on two main consequential aspects associated to Mosco convergence. Initially, we study its relationship with density properties for convex sets, and provide application {to} finite element discretization of problems involving convex sets. Subsequently, we focus on quasi-variational inequalities, impulse problems, and some stability properties of the set of solutions to the QVI.

{ As the notion of Mosco convergence can also be described via a functional convergence related to $\Gamma$-convergence (this is detailed within the paper),  the concept is directly applicable to the study of regularized minimization problems in highly diverse settings. Some of these that are not fully in the scope of this paper include viscosity solutions of minimization problems  \cite{attouch1996viscosity,attouch1996dynamical}, derivation of variational models for granular material accumulation \cite{igbida2011monge,bocea2012models}, Tikhonoff regularization for inverse problems \cite{neubauer1990finite,neubauer1988tikhonov}, and posteriori error estimates for adaptive finite elements \cite{sv07,feischl2014convergence} (and references within \cite{rosel2017approximation}).}

The paper is organized as follows. In section {\ref{sec:prelim} we provide some common notation used throughout the paper, and the famous definition of Mosco convergence together with basic results involving general classes of convex sets}. A short account on necessary and sufficient conditions of Mosco convergence for unilateral sets are given in section \ref{sec:sufnec}. The role of density properties for convex sets in Mosco convergence is explored in section \ref{sec:density}, and its subsequent application to finite element discretization is provided on section \ref{sec:FE}. Quasi-Variational Inequalities are considered in section \ref{sec:QVIs}, and a short, historical and modern account on impulse control problems is given next on section \ref{sec:impulse}. We finalize the paper with a short account on existence of QVIs and stability results for multivalued problems in sections \ref{sec:existence} and \ref{sec:Order}, respectively.

\section{Notations and Preliminary Results}\label{sec:prelim}

Throughout most of this paper we assume (unless stated otherwise) that $V$ is a reflexive real Banach space of (equivalence) classes of maps of the type $v:\Omega\to \mathbb{R}$ for some Lipschitz domain $\Omega\subset \mathbb{R}^\mathrm{d}$ with $\mathrm{d}\in \mathbb{N}$.  For an arbitrary Banach space $V$  we write $\|\cdot\|_V$ for its associated norm.  
The topological dual is denoted as $V'$, and by $\langle\cdot,\cdot\rangle_{V',V}$ the associated duality pairing.
For a sequence $\{z_n\}_{n\in\mathbb{N}}$ in $V$ we denote its strong convergence to $z\in V$ by ``$z_n\to z$'' and weak convergence by ``$z_n\rightharpoonup z$''.    Further, for two Banach spaces $V_1$ and $V_2$, we write $\mathscr{L}(V_1,V_2)$ for the space of bounded linear operators from $V_1$ to $V_2$.

{ The typical function spaces under consideration are described next. }For an open domain $\Omega$ in $\R^{\mathrm{d}}$, we denote $H^1(\Omega)$ to { be} the Sobolev space of $L^2(\Omega)$ functions whose weak gradients belong to $L^2(\Omega)$, and by $H_0^1(\Omega)$  we denote the subset of $H^1(\Omega)$ whose elements are zero in $\partial \Omega$ in the sense of the trace (provided that $\Omega$ is regular enough). For functions in $L^p(\Omega)$ with gradients in $L^p(\Omega)$, we utilize $W^{1,p}(\Omega)$, and $W_0^{1,p}(\Omega)$ for functions vanishing at $\partial\Omega$.

{ We are now in position to establish the notion of Mosco convergence and some of its basic consequences. }The celebrated definition by Mosco \cite{mosco1969convergence,mosco1967approximation} is the following 

\vspace{.2cm}

\begin{definition}[\textsc{Mosco convergence}]\label{definition:MoscoConvergence}
Let $\K$ and $\K_n$, for each $n\in\mathbb{N}$, be non-empty, closed and convex subsets of $V$. {Then} the sequence \textit{$\{\K_n\}$ {is said to} converge to $\K$ in the sense of Mosco} as $n\rightarrow\infty$, {denoted} by $$\K_n\Mosco\K,$$ if the following two conditions {are fulfilled}:
\begin{enumerate}[\upshape(I)]
  \item\label{itm:1}  For each $w\in \K$, there exists $\{w_{n'}\}$ such that $w_{n'}\in \K_{n'}$ for $n'\in \mathbb{N}'\subset \mathbb{N}$ and $w_{n'}\rightarrow w$ in $V$.
  \item\label{itm:2} If $w_n\in \K_n$ and $w_n\rightharpoonup w$ in $V$ along a subsequence, then $w\in \K$.
\end{enumerate}
\end{definition}

\vspace{.2cm}

In general and in concrete applications, item \eqref{itm:2} in Definition \ref{definition:MoscoConvergence} is significantly simpler to check than \eqref{itm:1}. In fact, \eqref{itm:1} requires clever constructions that leads into problem-tailored approaches.

The relevance of Mosco convergence can be explained by the fact that it provides the right ``topology'' for the obtention of stability results to solutions of variational inequalities when the constraint sets are perturbed. For this matter, consider $\mathbf{K}\subset V$ non-empty, closed and convex and $f\in V'$. We define $S(f,\K)$ as the unique solution to the following variational inequality (VI){:}
\begin{equation}\label{eq:VI}
\text{Find } y\in \mathbf{K}: \langle Ay-f,v-y\rangle \geq 0, \quad \forall v\in \mathbf{K},
\end{equation}
where{ 
\begin{equation}\label{eq:AssA}
	A:V\to V' \text{ is linear, bounded, and strongly monotone},
\end{equation}
and }we assume this about $A$ throughout the paper. { For a detailed account of problem \eqref{eq:VI}, we refer the author to \cite{Kinderlehrer} or \cite{BaiC1984}.} Then, we have the following result by Mosco: If $f_n\to f$ in $V'$, we have that 
\begin{equation*}
\K_n\Mosco\K \quad {\text{implies}}  \quad S(f_n,\K_n)\to S(f, \K) \text{ in } V;
\end{equation*}
see \cite{mosco1969convergence} or \cite{Rodrigues1987}.

In this paper we focus on two classes of problems that share similar difficulties with ``moving'' sets and hence Mosco convergence becomes a crucial tool in their treatment. Initially, we focus on optimization problems and their regularization/discretization and limiting behavior. In particular, we deal with the issue of Mosco convergence via properties of density of convex intersections. Secondly, we focus on some particular classes of quasi-variational inequalities (variational problems with implicit obstacles), and stability properties of the solution set.

We consider a general structure of the sets of interest that is wide enough to include pointwise bounds on function values, their gradient, curl or divergence, and also nonlocal type constraints like the ones arising from linear integral operators. 
The general structure of the sets of interest are of the form 
\begin{equation}\label{eq:KVI}
    \mathbf{K}=\{w\in V: \psi(Gw) \leq \phi \},
\end{equation}
where $\phi:\Omega\to\mathbb{R}$ is a nonnegative measurable function and ``$v\leq w$'' stands for $v(x)\leq w(x)$ for almost all (f.a.a.) {$x\in \Omega$, or almost everywhere (a.e.),} unless stated otherwise. We assume that $G\in \mathscr{L}(V,L^p(\Omega)^{\mathrm{m}})$ for some $1<p<+\infty$ and ${\mathrm{m}}\in \mathbb{N}$, that is, $G\colon V\to L^p(\Omega)^{\mathrm{m}}$ is linear and bounded. Additionally, we suppose that  $\psi\colon \mathbb{R}^{\mathrm{m}}\to\mathbb{R}$ is convex, $\psi(0)=0$, $\psi(tx)=t\psi(x)$ for all $x\in\mathbb{R}^{\mathrm{m}}$ and $t>0$, and it is possibly nonsmooth at the origin but smooth everywhere else. Note that the previous implies that $\K$ is convex, and closed. Further, since $\phi\geq 0$ we have that $\K$ is nonempty as well since $0\in\K$. 

A few words are in order to establish the generality of the structure of \eqref{eq:KVI}. The class of spaces we have in mind are either of Lebesgue or Sobolev type. The possible choice of $G$ is contingent upon the choice of $V$; for example, if $V=H_0^1(
\Omega)$ then $G$ can be considered as the weak gradient $\nabla$, and if $H_0(\mathrm{div};\Omega)$, we can take $G=\mathrm{div}$. The function $\psi$ commonly refers to a $\ell^p$-norm in $\mathbb{R}^{\mathrm{m}}$, and the absolute value if $\mathrm{m}=1$, or to just the identity, i.e., $\psi(x)=x$. The regularity of $\phi$ is not an issue for well-posedness of variational problems over $\K$, but additional properties will be required for the obtention of stability results for perturbations of $\phi$.

{%
Note that  the expression \eqref{eq:KVI}, for given  functions 
$\psi$, $G$, and $\phi$, determines a fixed closed and convex set $\K$.  
However, for problems like quasi-variational inequalities (QVIs), the set $\K$ is actually a state-dependent quantity: This would lead to a problem like \ref{eq:VI} where $y\mapsto\K(y)$ is not constant. In terms of \eqref{eq:KVI} and this setting, the dependence of $\K(y)$ on $y$ is determined by assuming that  $\phi=\Phi(y)$; this is discussed later on.
}%
Based {on} the structure of \eqref{eq:KVI}, we have a general result under relatively weak conditions for \eqref{itm:2} in Definition \ref{definition:MoscoConvergence} to hold.

\begin{proposition}\label{iiMosco}
Suppose that  $\phi_n\to \phi$ in $L^1(\Omega)$. Then \eqref{itm:2} in Definition \ref{definition:MoscoConvergence} holds true for
\begin{equation}\label{eq:Kn}
    \mathbf{K}_n=\{w\in V: \psi(Gw) \leq \phi_n \}.
\end{equation}

\end{proposition}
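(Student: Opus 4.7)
The plan is to verify condition (II) directly: assume $w_n \in \mathbf{K}_n$ with $w_n \rightharpoonup w$ in $V$ (passing to a subsequence), and show that $\psi(Gw) \leq \phi$ a.e.\ in $\Omega$. Since $G \in \mathscr{L}(V, L^p(\Omega)^{\mathrm{m}})$, weak continuity of bounded linear operators gives $Gw_n \rightharpoonup Gw$ in $L^p(\Omega)^{\mathrm{m}}$. The inequality defining $\mathbf{K}_n$ is pointwise, so I will test it against an arbitrary nonnegative $\eta \in L^\infty(\Omega)$ and pass to the limit.

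First I would record the properties of $\psi$ that make the argument go through. Because $\psi$ is finite-valued, convex, and positively homogeneous of degree one on $\mathbb{R}^{\mathrm{m}}$ with $\psi(0)=0$, it is sublinear: there exists $C>0$ with $|\psi(x)| \leq C|x|$ for all $x \in \mathbb{R}^{\mathrm{m}}$, and in particular $\psi$ is globally Lipschitz. Consequently, for any fixed nonnegative $\eta \in L^\infty(\Omega)$, the functional
\begin{equation*}
F_\eta(u) := \int_\Omega \psi(u(x))\, \eta(x)\, \dif x, \qquad u \in L^p(\Omega)^{\mathrm{m}},
\end{equation*}
is well-defined, convex (pointwise convexity of $\psi$ together with $\eta \geq 0$), and strongly continuous on $L^p(\Omega)^{\mathrm{m}}$ (by the Lipschitz bound and H\"older's inequality). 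A convex, strongly continuous functional on a Banach space is weakly lower semicontinuous, so $F_\eta(Gw) \leq \liminf_{n} F_\eta(Gw_n)$.

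Combining this with the constraint $\psi(Gw_n) \leq \phi_n$ and $\eta \geq 0$, I obtain
\begin{equation*}
\int_\Omega \psi(Gw)\, \eta\, \dif x \;\leq\; \liminf_n \int_\Omega \psi(Gw_n)\, \eta\, \dif x \;\leq\; \liminf_n \int_\Omega \phi_n\, \eta\, \dif x \;=\; \int_\Omega \phi\, \eta\, \dif x,
\end{equation*}
where the last equality uses $\phi_n \to \phi$ in $L^1(\Omega)$ together with $\eta \in L^\infty(\Omega)$. Since $\eta \geq 0$ was arbitrary in $L^\infty(\Omega)$, taking $\eta$ to be the indicator of an arbitrary measurable set $A \subset \Omega$ yields $\int_A \psi(Gw)\,\dif x \leq \int_A \phi\,\dif x$, which by a standard Lebesgue argument gives the pointwise bound $\psi(Gw) \leq \phi$ a.e.\ in $\Omega$, hence $w \in \mathbf{K}$.

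The only delicate point — and what I would view as the main obstacle — is justifying the weak lower semicontinuity of the integral functional $F_\eta$ when $\psi$ may fail to be smooth at the origin. This is precisely where I would lean on the hypothesis that $\psi$ is finite convex (so automatically continuous on $\mathbb{R}^{\mathrm{m}}$) together with sublinearity, which rescues both integrability of $\psi(Gw_n)$ in $L^1(\Omega)$ (uniformly, since $\{Gw_n\}$ is bounded in $L^p$) and the Mazur-type weak lower semicontinuity of $F_\eta$. The smoothness of $\psi$ away from the origin is not needed for condition (II); it typically enters only when proving the recovery condition (I).
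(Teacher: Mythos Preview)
Your proof is correct, and it reaches the same conclusion as the paper, but the organization is different. The paper argues pointwise: from $Gw_n \rightharpoonup Gw$ in $L^p(\Omega)^{\mathrm{m}}$ it invokes Mazur's lemma to produce convex combinations $z_n$ of the $Gw_k$ converging strongly to $Gw$, applies the pointwise convexity of $\psi$ to get $\psi(z_n)\le \phi + \epsilon_n$ with $\epsilon_n\to 0$ in $L^1(\Omega)$, and then passes to an a.e.\ limit along a subsequence. Your argument instead packages the Mazur step into the standard fact that a convex, strongly continuous functional is weakly lower semicontinuous, and then recovers the pointwise inequality by testing against nonnegative $\eta\in L^\infty(\Omega)$ (equivalently, indicators of measurable sets). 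The two are equivalent in spirit---weak l.s.c.\ of convex integrals is typically proved via Mazur---but your route avoids extracting a.e.\ convergent subsequences and is slightly more robust, since the integral-testing step works without tracking which subsequence one is on; the paper's version, on the other hand, is more explicit about why convexity is the essential ingredient.
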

\begin{proof}
For $w_n\in \K_n$, we have $\psi(Gw_n)\leq \phi_n$, and if $w_n\rightharpoonup w$  in $V$, it follows that $Gw_n\rightharpoonup Gw$  in $L^p(\Omega)^d$. By Mazur's lemma, there exists $z_n=\sum_{k=n}^{N(n)}\alpha{_k}(n) Gw_k$ where $\sum_{k=n}^{N(n)}\alpha{_k}(n)=1$ and $\alpha{_k}(n)\geq 0$ such that $z_n\to Gw$ in  $L^p(\Omega)^d$. Since $\psi\colon \mathbb{R}^d\to \mathbb{R}$ is convex,  
\begin{equation*}
	\psi(z_n)\leq \sum_{k=n}^{N(n)}\alpha{_k}(n) \psi(Gw_k)\leq \phi+\underbrace{\sum_{k=n}^{N(n)}\alpha{_k}(n) |\phi_k-\phi|}_{\epsilon_n}.
\end{equation*}
Since  $\phi_n\to \phi$ in $L^1(\Omega)$, then $\epsilon_n\to 0$ in $L^1(\Omega)${. Therefore,} we {obtain} $w\in  \K$ by taking the limit above (over some subsequence { converging in the pointwise almost everywhere sense}).
\end{proof}

On the other hand,  the 
existence of the subsequence in \eqref{itm:1} of Definition \ref{definition:MoscoConvergence} requires problem-specific constructions rendering it (in general) much harder to prove than  the condition in \eqref{itm:2} . {Perhaps the simplest situation in which} \eqref{itm:1} holds is  the obstacle case {with} $\phi_n\to \phi$ in $V$ and where $V\ni z\mapsto \min(0,z)\in V$ is continuous: Let $w\leq \phi$ be arbitrary and define $w_n:=\min(w,\phi_n)$ so that $w_n\leq \phi_n${.} {Since} $\phi_n\to \phi$ in $V$, it follows that $w_n\to w$ in $V$. { Consequently}  \eqref{itm:1}  holds true. The relaxation of `` $\phi_n\to \phi$ in $V$'' is a complex task that we tackle in some simple cases. We provide now some general constructions for \eqref{itm:1} and for Mosco convergence.

\begin{proposition}\label{iMosco} 
Let $V$ be either $W_0^{1,p}(\Omega)$ or $W^{1,p}(\Omega)$ 
	with $1\leq p< +\infty$, { and $\phi,\phi_n\in L^\infty(\Omega)$ for $n\in\mathbb{N}$}. Suppose that for some 
$\nu\in[0,\infty)$, 
 $\phi_n\geq \nu$ a.e. for all $n\in\mathbb{N}$ 
 	and  
	\begin{equation*}
		\phi_n\to \phi \quad \text{in}\quad L^\infty(\Omega).
	\end{equation*}
	 Then, if either {one of the two conditions}
	 \begin{align*}
	 	 & \mathrm{(i)}\quad \nu=0,\: G=\mathrm{id} \:\text{ and }\: \psi(x)=|x| \: \text{ or } \: \psi(x)=x \qquad \\[.1cm]
	 	& \mathrm{(ii)}\:\:\: \nu>0,\:  G=\nabla \:\text{ and }\:\psi(x)=\|x\|_{\ell^q} \: { with } \: 1\leq q\leq +\infty 
	 \end{align*}
	 hold true, we observe that 
	 \begin{equation*}
	 	\{w\in V: \psi(Gw) \leq \phi_n \} \Mosco\{w\in V: \psi(Gw) \leq \phi \} .
	 \end{equation*}
\end{proposition}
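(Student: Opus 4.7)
The strategy is to verify conditions~\eqref{itm:1} and~\eqref{itm:2} of Definition~\ref{definition:MoscoConvergence} separately, and I would dispatch~\eqref{itm:2} immediately by invoking Proposition~\ref{iiMosco}: since $\Omega$ is a bounded Lipschitz domain, $L^\infty(\Omega)$-convergence implies $L^1(\Omega)$-convergence, so that result applies verbatim. The real content lies in constructing the recovery sequence for~\eqref{itm:1}, and throughout I would set $\epsilon_n := \|\phi_n - \phi\|_{L^\infty(\Omega)}\to 0$.

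Case~(ii) is handled by a direct scaling argument. For $w\in\mathbf{K}$, I would define $w_n := \theta_n w$ with $\theta_n := 1 - \epsilon_n/\nu \in (0,1)$ for $n$ large. Positive homogeneity of $\psi$ gives $\psi(\nabla w_n) = \theta_n\psi(\nabla w) \le \theta_n\phi$, and the uniform lower bound $\phi\ge\nu$ yields $\theta_n\phi \le \phi - (1-\theta_n)\nu = \phi - \epsilon_n \le \phi_n$. Since $\theta_n\to 1$, the convergence $w_n\to w$ in $V$ is immediate.

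In Case~(i) the obstacle $\phi$ may vanish, so scaling alone fails and I would instead use a Lipschitz \emph{dead-zone} truncation. Setting $\lambda_n := \epsilon_n$, I define $\beta_n\colon\mathbb{R}\to\mathbb{R}$ by $\beta_n(s) := \mathrm{sgn}(s)(|s|-\lambda_n)^+$ when $\psi(x)=|x|$, and by $\beta_n(s) := s - \min(s^+,\lambda_n)$ when $\psi(x)=x$. Each $\beta_n$ is $1$-Lipschitz with $\beta_n(0)=0$, so $w_n := \beta_n(w)$ lies in $V$ (for either $W^{1,p}$ or $W_0^{1,p}$). A case split over the regions $\{w\le 0\}$, $\{|w|<\lambda_n\}$, and $\{|w|\ge\lambda_n\}$, combined with $\phi\le\phi_n+\epsilon_n$, confirms $\psi(w_n)\le\phi_n$ a.e.

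The main obstacle is then the $V$-convergence $w_n\to w$. The uniform bound $|\beta_n(s)-s|\le\lambda_n$ makes the $L^p$-convergence trivial. For gradients, the Lipschitz chain rule gives $\nabla w_n = \beta_n'(w)\nabla w$, hence $\nabla(w_n - w) = -\chi_{A_n}\nabla w$, where $A_n$ is the zero-slope region of $\beta_n(w)$ and is contained in $\{|w|<\lambda_n\}$. As $\lambda_n\to 0$ the set $A_n$ shrinks (up to a null set) to $\{w=0\}$, where $\nabla w = 0$ a.e.~by Stampacchia's theorem. Therefore $\chi_{A_n}|\nabla w|^p\to 0$ a.e., and dominated convergence with dominator $|\nabla w|^p\in L^1(\Omega)$ delivers $\nabla w_n\to\nabla w$ in $L^p$, completing~\eqref{itm:1}.
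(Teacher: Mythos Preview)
Your proof is correct and follows essentially the same route as the paper: both dispatch \eqref{itm:2} via Proposition~\ref{iiMosco}, handle case~(ii) by a multiplicative scaling $w_n=\theta_n w$ with $\theta_n\to 1$ (your $\theta_n=1-\epsilon_n/\nu$ versus the paper's $(1+\epsilon_n/\nu)^{-1}$), and handle case~(i) via a $1$-Lipschitz dead-zone truncation of $w$, with gradient convergence obtained from Stampacchia's lemma and dominated convergence exactly as you describe. The only cosmetic difference is your truncation $\beta_n$ in the $\psi(x)=x$ subcase, which leaves negative values of $w$ untouched rather than sending them to zero as the paper's map $T_n$ does.
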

\begin{proof}
	Suppose that $\nu>0$. Note that since $\phi_n\to \phi$ in  $L^\infty(\Omega)$ by Proposition \ref{iiMosco}, \eqref{itm:2} in Definition \ref{definition:MoscoConvergence} holds true. In order to prove \eqref{itm:1}, let $w^*\in V$
 and  $\psi(Gw^*) \leq \phi$ be arbitrary. Define $w_n:=\beta_nw^*$ where
	\begin{equation*}
\beta_n:=\left(1+\frac{\|\phi_n-\phi\|_{L^\infty(\Omega)}}{{\nu}}\right)^{-1}.
\end{equation*}
It follows that $w_n\to w^*$ in $W^{1,p}_0(\Omega)$ and $\psi(Gw_n)\leq \phi_n$ {(cf. Hinterm{{\"u}}ller and Rautenberg~\cite{MR3023771})} which finishes the proof.

In the case $\nu=0$ for $\mathrm{(i)}$, consider
\begin{equation*}
	w_n=T_n(w^*):=\begin{cases} 
	 (\psi( w^*)-\|\phi-\phi_n\|_{L^\infty(\Omega)})^+\frac{w^*}{\psi(w^*)}, &\mbox{if } \psi(w^*)\neq 0; \\
0, & \mbox{if }  \psi(w^*)=0.\end{cases}
\end{equation*}
Note first that $w_n\in L^p(\Omega)$ and clearly $w_n\to w^*$ in $L^p(\Omega)$.
Further, note that $T_n(w^*)(x)=h_n(w^*(x))$ for some 
$h_n:\mathbb{R}\to\mathbb{R}$ and for each $n\in\mathbb{N}$. In this case, we have that $x\mapsto h_n(x)$ is globally Lipschitz, $h_n(0)=0$, then it follows that $T_n:V\to V$ is continuous {(see Marcus and Mizel~\cite{MR546508})}. 
Hence $\nabla w_n\in L^p(\Omega)$  and further
\begin{equation*}
	\nabla w_n= \nabla w^*\: T'_n(w^*) \qquad \text{with}\qquad T_n'(x):=\begin{cases} 
	 x/\psi(x), &\mbox{if } \psi(x)\geq \|\phi-\phi_n\|_{L^\infty(\Omega)}; \\
0, & \mbox{otherwise} .\end{cases}
\end{equation*} 
where we have used that $\psi(x)=x$ or $\psi(x)=|x|$. Hence,  $\nabla w_n\to \nabla w^*$ in $L^p(\Omega)$ { given that $T'_n(w)\to x/\psi(x)$ in $L^q(\Omega)$ for any $1\leq q<\infty$}, i.e., $w_n\to w^*$ in $V$. Finally, for $x\in \Omega$ we observe
\begin{equation*}
	\psi(w_n(x))=|(\psi( w(x))-\|\phi-\phi_n\|_{L^\infty(\Omega)})^+|\leq |(\psi( w(x))-\phi(x)+\phi_n(x))^+|\leq \phi_n(x),
\end{equation*}
which completes the proof.
\end{proof}

The case $\nu=0$ can also be handled in the gradient constraint case under additional assumptions on the regularity of the domain $\Omega$. Indeed, if 
$\Omega$ is bounded with $\partial\Omega$ of class $C^2$, then the result of 
the above theorem holds true in the $\mathrm{(i)}$ case, 
e.g. see Azevedo and Santos~\cite{Santos2004}.

\section{Further sufficient and necessary conditions for unilateral sets}\label{sec:sufnec}

In applications, it is common to encounter obstacle-type (or unilateral) constraints, i.e., 
\begin{equation}
	\K=\{w\in W_0^{1,p}(\Omega): w\leq \phi\}, \quad \text{ and } \quad 	\K_n=\{w\in W_0^{1,p}(\Omega): w\leq \phi_n\},
\end{equation}
for $1<p<+\infty${,} and $\Omega$ open and bounded. As we have shown before, it is simple to observe that if $\phi_n\to \phi$ in $W_0^{1,p}(\Omega)$,  then $\K_n\Mosco\K$. This, however, can be relaxed significantly and still preserve the Mosco convergence as we briefly discuss next.

The study of sufficient and of necessary conditions for convergence in the sense of Mosco has been an active area of research for several decades. In the case of unilateral sets and in $W_0^{1,p}(\Omega)$, a complete answer was given  by Dal Maso \cite{MR791845} where the condition involves properties on the $p-$capacities of the sets $\{x\in\Omega: \phi_n(x)<t\}$ and $\{x\in\Omega: \min(\phi(x),\phi_n(x))<t\}$. A similar capacitary approach was taken by Attouch and Picard \cite{MR683876,MR695419,MR653202} and sufficient conditions were obtained under stronger conditions than Dal Maso. 

As a sufficient condition for Mosco convergence, $\phi_n\to \phi$ in $W_0^{1,p}(\Omega)$ can be relaxed also substantially by means of the compactness result in Murat~\cite{MR633007} for Lipschitz domains that states: If $F_n\rightharpoonup F$ in $H^{-1}(\Omega)$ with $F_n\geq0 $ for all $n\in \mathbb{N}$, then $F_n\to F$ in $W^{-1,q}(\Omega)$ with $q<2$. Here, $F_n\geq0 $ {refers to} $\langle F_n,\sigma\rangle\geq 0$ for all $\sigma \in H_0^1(\Omega)$ with $\sigma\geq 0$. 
{Moreover}, the  Lipschitz regularity of $\partial\Omega$ can be dropped and the result {still remains intact; see} 
Br\'{e}zis~\cite{BrezisMur}. In our setting, 
{this result} leads to the following useful {assertion}; see Boccardo and Murat~\cite{MR1145746,MR652507}: If $\phi_n\rightharpoonup \phi$ in $W^{1,q}(\Omega)$ or $W_0^{1,q}(\Omega)$ 
for some $q>p$,  then $\K_n\Mosco\K$. In summary, Mosco convergence is maintained when switching from strong into weak convergence of the obstacles, provided that the gradients of the obstacles possess an $\epsilon>0$ extra amount of integral regularity.    

Analogous results to the one of Boccardo and Murat and of Dal Maso \cite{MR791845} were unknown for fractional spaces $W^{s,p}_0(\Omega)$ for $s\in (0,1)$ until recently {(see \cite{antil2020fractional})}. Applications for these kind of problems can be  seen in 
Antil and Rautenberg~\cite{antil2017fractional}. 
In the same vein, it is an open question whether it is possible to extend the above result of Boccardo and Murat~\cite{MR1145746,MR652507} 
to weighted Sobolev spaces $W_0^{1,p}(\Omega; w)$ for some $w$ 
in a Muckenhoupt class.

\section{The role of density in Mosco convergence}\label{sec:density}

This section entails a discussion on how density properties are related to Mosco convergence in regularization/discretization of optimization problems; we follow closely \cite{MR3710331}. 

In variational problems with constraints, one seeks the solution in a given convex, closed and nonempty 
feasible set $\K$ of a certain Banach space $(V,\|\cdot\|)$ not necessarily reflexive.  
To start the discussion in this section, 
let us consider the following abstract class of optimization problems:
\begin{equation}\label{prob:abstract}
  \min   F(u) \quad \text{over}\quad u\in \K,
\end{equation}
where assume that $F:V\to \mathbb{R}$ is continuous, coercive and sequentially weakly lower semicontinuous (not necessarily convex).     

Problem \eqref{prob:abstract} admits a solution provided $V$ is reflexive: Let $\{u_n\}$ be an infimizing sequence. Since $F$ is coercive and there is a feasible point, $\{u_n\}$ is bounded. Since $V$ is reflexive, $u_n\rightharpoonup u^*$ along a subsequence for some $u^*\in V$. Since $\K$ is convex and closed, it is weakly closed, and hence $u^*\in \K$. Finally, since $F$ is sequentially weakly lower semicontinuous, we have
\begin{equation*}
	F(u^*)\leq \liminf F(u_n)=\inf_{u\in\K}F(u),
\end{equation*}
i.e., $u^*$ is a minimizer, { and a subsequence of $\{u_n\}$ is not only a infimizing sequence but a minimizing one as well.}

The problem class \eqref{prob:abstract} is general enough to encompass numerous fields, such as 
 variational inequality problems of potential type, and optimal control of partial differential equations 
with constraints on the state and/or control among others. The study of \eqref{prob:abstract} and the design
of solution algorithms involve  
 concepts of perturbation or dualization methods comprising regularization, 
penalization or discretization
approaches (or a combination thereof). The stability properties of \eqref{prob:abstract} with respect to a large class of perturbations
is contingent upon the following density property:  For a {particular} dense subspace $Y$ of $V$, { it holds true that}
\begin{equation}\label{eqn:abstract_density}
	\overline{\{ u\in Y: u\in \K\}}^V=\K,
\end{equation}
or in short $\overline{\K\cap Y}^V=\K$. { Note that $\overline{\K\cap Y}^V$ refers to the closure in the $V$-norm of the set $\K\cap Y$.} 
In order to prove this, we start with the definition of $\Gamma$-convergence 
and its relation to Mosco convergence.

\begin{definition}
	Let $G_n:V\to \mathbb{R}\cup\{+\infty\}$ for $n\in\mathbb{N}$. We define  the $\Gamma$-upper and -lower limit at $u$ of $G_n$ as
	\begin{equation*}
		  \Gamma \text{-} \limsup_{n\to +\infty} G_n(u) := \sup_{U\in \mathcal{ N}(u)} \limsup_{n\to +\infty}  \inf_{w\in U} G_n(w), 
	\end{equation*}
	and
		\begin{equation*}
		  \Gamma\text{-}\liminf_{n\to +\infty} G_n(u) := \sup_{U\in \mathcal{ N}(u)} \liminf_{n\to +\infty}  \inf_{w\in U} G_n(w), 
	\end{equation*}
	respectively, where $\mathcal{ N}(u)$ denotes the set of all open neighborhoods in the norm of $V$. Analogously, we denote the weak versions of the above $\Gamma_w \text{-} \limsup G_n(u)$ and $\Gamma_w\text{-}\liminf G_n(u)$ where open neighborhoods are considered in the weak topology. Provided the limits  exists and are identical, we write 
		\begin{equation*}
		  \Gamma \text{-} \lim_{n\to +\infty} G_n(u) := \Gamma \text{-} \limsup_{n\to +\infty} G_n(u) = \Gamma\text{-}\liminf_{n\to +\infty} G_n(u), 
	\end{equation*}
	and say the quantity above is the (norm) $\Gamma$-limit of $G_n$ at $u$. Similarly, in the weak topology case, we define
	 \begin{equation*}
		  \Gamma_w \text{-} \lim_{n\to +\infty} G_n(u) := \Gamma_w \text{-} \limsup_{n\to +\infty} G_n(u) = \Gamma_w\text{-}\liminf_{n\to +\infty} G_n(u), 
	\end{equation*}
	provided the limits exist and are equal.
\end{definition}
	 
	 The connection of $\Gamma$-convergence and Mosco convergence is immediate. Consider the sequence $\{i_{\K_n}\}$ of indicator functions  $i_{\K}:V\to \mathbb{R}\cup\{+\infty\}$  for the sequence of convex closed and non-empty sets $\{\K_n\}$. Then, $\K_n\Mosco\K$ if and only if for each sequence $u_n\rightharpoonup u$ in $V$ with $u_n\in\K_n$, we have
\begin{equation*}
		  \liminf_{n\to +\infty} i_{\K_n}(u_n) \geq i_{\K}(u), 
\end{equation*}
	and for each $u\in \K$, there exists a sequence such $u_n\in \K_n$ such that $u_n\to u$ and 
\begin{equation*}
		   \limsup_{n\to +\infty} i_{\K_n}(u_n) \leq i_{\K}(u). 
\end{equation*}

We consider the above concepts applied to a general class of problems. For this matter, we define the sequence of perturbed problems 
\begin{equation}\label{prob:perturbed}
	\inf\quad F(u) + R_n(u), \quad \text{over } u\in V, 
\end{equation}
defined by given perturbations $R_n : V \to \mathbb{R}\cup \{+\infty\}$  
of the indicator function $i_{\K}:V\to \mathbb{R}\cup\{+\infty\}$  such that 
there exist functions $\underline{R}_n : X \to \mathbb{R}\cup \{+\infty\}$ and  
$\overline{R}_n : X \to \mathbb{R}\cup \{+\infty\}$ 
where
\begin{equation*}
	0 \le \underline R_n \le R_n \le \overline R_n \quad\forall n\in\mathbb{N}, 
\end{equation*}
and the additional properties hold

\begin{equation}\label{ass:lowerRn}
\begin{split}
& \underline{R}_n  \le \underline{R}_{n+1} \;\forall n\in\mathbb{N},\quad \lim_{n\to +\infty}\underline{R}_n(u) = i_{\K}(u) \;\forall u\in V ,\\  
& \underline{R}_n  \text{ sequentially weakly lower semicontinuous} \; \forall n\in\mathbb{N},
\end{split}	
\end{equation}
i.e., if $v_k\rightharpoonup v$ then $\underline{R}_n  (v)\leq \liminf_{k}\underline{R}_n (v_k)$, and 
\begin{equation}\label{ass:upperRn}
\overline{R}_n \ge \overline{R}_{n+1} , \; \forall n\in\mathbb{N}, \quad
\lim_{n\to +\infty}\overline{R}_n(u) = i_{\K\cap Y}(u) \quad\forall u\in V.   
\end{equation}
Mappings $R_n$ that  share the above features are usually called
\textit{quasi-monotone perturbations of the indicator 
function $i_{\K}$ with respect to the (dense) subspace $Y$}.  
We assume no additional assumptions for $R_n$ itself. 
In the stability analysis of \eqref{prob:perturbed}, 
the condition \eqref{eqn:abstract_density} appears immediately if using the theory of $\Gamma$-convergence 
(\cite{d93}): Under mild assumptions on $V$, the density property 
\eqref{eqn:abstract_density} ensures that $F+i_{\K}$ is the $\Gamma$-limit of $F+R_n$ in both, 
the weak and strong topology. { In this setting, the problem \eqref{prob:perturbed} admits a minimizer $u$, and 
each weak cluster point of any sequence of minimizers $\{u_n\}$ is a minimizer of \eqref{prob:abstract}; see Dal Maso \cite[Corollary 7.20]{d93}}.

We are now in position to establish the relation between $\Gamma$-convergence (and Mosco convergence) to the density property \eqref{eqn:abstract_density}

\begin{theorem}\label{prop:G-conv} 
Let $\{R_n\}$ be a sequence of quasi-monotone perturbations of $i_{\K}$ with respect to the 
dense subspace $Y$. Let the Banach space $V$ be reflexive or assume that $V^\ast$ is separable.  
If the density property \eqref{eqn:abstract_density} holds true, 
then $F+i_{\K}$ is the $\Gamma$-limit of $F+R_n$ in both, 
the weak and strong topology.  
\end{theorem}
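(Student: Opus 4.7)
The plan is to establish the two standard ingredients of sequential $\Gamma$-convergence, namely the liminf lower bound and the existence of a recovery sequence, in a way that covers both the strong and weak topologies simultaneously. The assumption that $V$ is reflexive or $V^*$ is separable ensures that bounded subsets of $V$ are weakly metrizable; combined with the coercivity of $F$, this reduces the verification of the weak $\Gamma$-limit to its sequential formulation, which is what makes the two topologies amenable to a unified treatment.

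For the liminf inequality, fix $u \in V$ and let $u_n \rightharpoonup u$. The sequential weak lower semicontinuity of $F$ gives $\liminf_n F(u_n) \geq F(u)$, so it suffices to prove $\liminf_n R_n(u_n) \geq i_{\K}(u)$. Using $R_n \geq \underline{R}_n$ together with the monotonicity $\underline{R}_n \leq \underline{R}_{n+1}$ from \eqref{ass:lowerRn}, for every fixed $m \in \mathbb{N}$ and all $n \geq m$ one has $R_n(u_n) \geq \underline{R}_m(u_n)$. The sequential weak lower semicontinuity of $\underline{R}_m$ then yields $\liminf_n R_n(u_n) \geq \underline{R}_m(u)$, and letting $m \to \infty$ using $\underline{R}_m(u) \to i_{\K}(u)$ closes this step. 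This argument also covers the strong topology, since any strongly convergent sequence converges weakly.

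For the recovery sequence, fix $u \in V$. If $u \notin \K$, then $(F + i_{\K})(u) = +\infty$ and the constant sequence $u_n \equiv u$ suffices. If $u \in \K$, the density property \eqref{eqn:abstract_density} produces $v_k \in \K \cap Y$ with $v_k \to u$ strongly in $V$, and the upper bound \eqref{ass:upperRn} gives $\overline{R}_n(v_k) \to i_{\K \cap Y}(v_k) = 0$ as $n \to \infty$ for each fixed $k$. A diagonal extraction then provides indices $n_k \uparrow \infty$ such that $\overline{R}_n(v_k) \leq 1/k$ for all $n \geq n_k$; defining $u_n := v_{k(n)}$ with $k(n) := \max\{k : n_k \leq n\}$ yields $u_n \to u$ strongly in $V$ together with $R_n(u_n) \leq \overline{R}_n(v_{k(n)}) \leq 1/k(n) \to 0$. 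Continuity of $F$ then gives $\limsup_n (F + R_n)(u_n) \leq F(u) = (F + i_{\K})(u)$, and since strong convergence implies weak convergence the same sequence serves as a recovery sequence for both topologies.

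The main technical delicacy lies in the diagonal construction of the recovery sequence, which must produce a single sequence indexed by $n$ whose terms simultaneously approximate $u$ in $V$ and drive $R_n$ to zero; the monotonicity $\overline{R}_n \geq \overline{R}_{n+1}$ in \eqref{ass:upperRn} is what makes this extraction robust, ensuring that the smallness bounds $\overline{R}_{n_k}(v_k) \leq 1/k$ persist along the diagonal. The density property \eqref{eqn:abstract_density} plays a dual role here: it is both what makes $\K \cap Y$ a viable source of approximants and what prevents the $\Gamma$-limit from collapsing to the smaller indicator $i_{\K \cap Y}$ instead of $i_{\K}$.
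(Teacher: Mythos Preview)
Your proof is correct and takes a genuinely different route from the paper's. The paper works entirely within Dal Maso's abstract machinery: it bounds $\Gamma_w\text{-}\limsup(F+R_n)$ above via the monotone $\Gamma$-convergence of $F+\overline{R}_n$ to its pointwise limit $F+i_{\K\cap Y}$ (whose lower semicontinuous envelope is $F+i_{\overline{\K\cap Y}}$), and bounds $\Gamma_w\text{-}\liminf(F+R_n)$ below by showing each $F+\underline{R}_n$ is actually weakly lower semicontinuous (using coercivity and the assumption on $V$ to upgrade sequential weak closedness of sublevel sets to weak closedness), so that $\Gamma_w\text{-}\liminf(F+\underline{R}_n)=\lim_n(F+\underline{R}_n)=F+i_\K$. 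You instead verify the sequential liminf inequality and construct an explicit diagonal recovery sequence, then invoke equi-coercivity plus weak metrizability of bounded sets to identify the sequential and topological weak $\Gamma$-limits.

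What each approach buys: the paper's argument is shorter and cites standard results (Dal Maso Props.~5.4, 5.7, 6.3, 6.7, 8.7, 8.14) but is opaque unless one has those at hand; your argument is self-contained and makes the role of each hypothesis (monotonicity of $\underline{R}_n$ and $\overline{R}_n$, density, coercivity) visibly explicit, at the cost of the diagonal construction. One point worth tightening: your opening reduction of the weak $\Gamma$-limit to its sequential form is asserted rather than proved. It is valid here because $F+R_n\ge F$ gives equi-coercivity, so all the action takes place on a fixed bounded set where the weak topology is metrizable under the hypothesis on $V$; this is precisely the content of the Dal Maso propositions the paper cites, and it would be worth saying so (or citing \cite[Prop.~8.10 and Thm.~8.5]{d93}) rather than leaving it as a one-line remark, since without it the sequential liminf inequality only bounds $\Gamma_{w,\mathrm{seq}}\text{-}\liminf$, which a priori dominates the topological $\Gamma_w\text{-}\liminf$.
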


\begin{proof}
Denote by $\mathrm{sc}^- G$ to the lower semicontinuous envelope of $G:V\to\mathbb{R}\cup\{+\infty\}$.
 From the relationship between $\Gamma$- and pointwise convergence \cite[Chapter 5]{d93},
with \eqref{ass:upperRn} and the continuity of $F$, we observe
\begin{align*}
	\Gamma_w \text{-}\limsup (F + R_n) & \le \Gamma \text{-}\limsup (F + R_n)\\ 
 & \le \Gamma \text{-} \limsup (F + \overline{R}_n) \\
 & = \mathrm{sc}^- (F + i_{\K\cap Y}) \\
 &= F + i_{\overline{\K\cap Y}},
\end{align*}
where we use \cite[Prop. 6.3, Prop. 6.7, Prop. 5.7, Prop. 3.7]{d93}. 

Analogously, \eqref{ass:lowerRn} together with \cite[Prop. 6.7, Prop. 5.4]{d93} leads to
\begin{equation}\label{eqn:scw}
	\Gamma_w\liminf (F + R_n)   \ge \Gamma_w\liminf (F + \underline{R}_n) = \lim_{n\to +\infty} \mathrm{sc}^-_w( F + \underline{R}_n) 
\end{equation}
where $\mathrm{sc}^-_w( F + \underline{R}_n)$  denotes the lower semicontinuous envelope of $F + \underline{R}_n$
in the weak topology of $V$. In addition, note that the coercivity and the sequential weak lower semicontinuity of 
$F+\underline{R}_n$ imply that the level sets $\{u\in V: F(u)+R_n(u) \le t\}$ are bounded and 
sequentially weakly closed. 
Since $V$ is reflexive or it has a separable dual $V^\ast$, then the sequential weak closure of 
bounded subsets coincides with the 
weak closure, see \cite[Prop. 8.7, Prop. 8.14]{d93}. Further, 
 $F+\underline{R}_n$ is weakly lower semicontinuous which determines  
\begin{equation*}
\Gamma_w\text{-}\liminf (F + R_n)  \ge  \lim_{n\to +\infty} (F + \underline{R}_n) = F + i_{\K}, 	
\end{equation*}
by \eqref{eqn:scw}.
Therefore, we observe that 
\begin{align*}
 F + i_{\K}  & \le \Gamma_w\text{-}\liminf (F+R_n)  \\ 
          & \le \Gamma_w\text{-}\limsup (F+R_n) \\
          &\le \Gamma \text{-}\limsup (F+R_n) \\
          & \le F+i_{\overline{\K\cap Y}},	
\end{align*}
such that 
$\Gamma\text{-}\lim(F+R_n) = \Gamma_w\text{-}\lim (F+R_n ) = F + i_{\K} $ if \eqref{eqn:abstract_density} holds true.
\end{proof}

{

In what follows, we provide a selection of approximation/regularization methods which fit 
into the general class of perturbations given by \eqref{prob:perturbed} and which are used very frequently in practice. 

\vspace{.2cm}

\begin{ex}[Tikhonov-Regularization]\label{ex:Tikhonov}
Let  $(Y,\|\cdot\|_Y)$ be a Banach space, and suppose that $Y$
is densely and continuously embedded into $V$. For a sequence of positive 
non-decreasing numbers  $\{\gamma_n\}$ 
with $\gamma_n\to +\infty$ and fixed $\alpha>0$, consider in \eqref{prob:perturbed} the Tikhonov regularization 
\begin{equation}
	R_n(u) = i_{\K}(u) + \tfrac{1}{2\gamma_n}\|u\|_Y^\alpha.
\end{equation}
We assume that  
$R_n(u)=+\infty$ if $u\notin Y$. Then, set 
\begin{equation*}
	\underline{R}_n := i_{\K}, \qquad \text{and} \qquad \overline{R}_n := R_n,
\end{equation*}
for all $n\in\mathbb{N}$, and \eqref{ass:lowerRn} and \eqref{ass:upperRn} are satisfied so that $R_n$ is in the 
context of \eqref{prob:perturbed}.
\end{ex}

\begin{ex}[Conformal discretization]\label{ex:Galerkin} Let $V$ be a separable Banach space.
Assume that \eqref{prob:abstract} is approximated by a Galerkin approach using nested and 
conformal finite-dimensional subspaces $V_n$, i.e., we have
$V_n\subset V$ and $V_n\subset V_{n+1}$ for all $n\in \mathbb{N}$ with the Galerkin approximation property:
\begin{equation*}
	\overline{\bigcup_{n\in\mathbb{N}} V_n}^V = V. 
\end{equation*}
Therefore, problem \eqref{prob:abstract} is replaced by \eqref{prob:perturbed} by the 
discretized counterpart defined by
 $R_n(u) = i_{\K\cap V_n}$. In this setting, define
 \begin{equation*}
 	\underline{R}_n := i_{\K}, \qquad \text{and}\qquad \overline{R}_n = R_n.
 \end{equation*}
It follows that \eqref{ass:lowerRn} is satisfied, and if 
 $Y := \bigcup_{n\in\mathbb{N}} V_n$, then
\eqref{ass:upperRn} is fulfilled as well.
\end{ex}

\vspace{.2cm}

\begin{ex}[Combined Moreau-Yosida-Tikhonov-Regularization]\label{ex:TikhonovMY}
Let $V$ be a Hilbert space and $(Y,\|\cdot\|_Y)$ a Banach space with $Y$ densely and continuously embedded into  $V$. For two sequences of positive 
non-decreasing numbers $\{\gamma_n\},\{\gamma_n'\}$ 
with $\gamma_n,\gamma_n'\to +\infty$ and fixed $\alpha>0$, consider the simultaneous Moreau-Yosida 
and Tikhonov regularization: 
\begin{equation}
R_n(u) = \tfrac{\gamma_n}{2}\inf_{v\in \K} \|u-v\|^2  + \tfrac{1}{2\gamma_n'}\|u\|_Y^\alpha,   
\end{equation}
with $\alpha>0$ fixed. We assume that 
$R_n(u)=+\infty$ if $u\notin Y$, and  define
\begin{equation*}
	\underline{R}_n(u) =  \tfrac{\gamma_n}{2}\inf_{v\in \K} \|u-v\|^2\qquad\text{and}\qquad\overline{R}_n(u) = i_{\K}(u) + \tfrac{1}{2\gamma_n}\|u\|_Y^\alpha.
\end{equation*} 
It is well-known from the theory of Moreau-Yosida regularizations that $\underline{R}_n$ 
satisfies \eqref{ass:lowerRn}, and
\eqref{ass:upperRn} it is also directly verified.
\end{ex}

\vspace{.2cm}

\begin{ex}[Conformal discretization and Moreau-Yosida regularization]\label{ex:GalerkinMY} 
Let $V$ be a separable Hilbert space and $\{\gamma_n\}$ a sequence of positive non-decreasing numbers with 
$\gamma_n\to+\infty$. The simultaneous regularization and discretization leads to 
\begin{equation}\label{def:GalerkinMY}
R_n(u) = \tfrac{\gamma_n}{2}\inf_{v\in \K} \|u-v\|^2 + i_{V_n}(u),	
\end{equation}
where the sequence of spaces $\{V_n\}$ is defined as in the previous examples. 
Defining
\begin{equation*}
	\underline{R}_n = \tfrac{\gamma_n}{2}\inf_{v\in \K} \|u-v\|^2  \qquad \text{and} \qquad \overline{R}_n = i_{\K\cap V_n},
\end{equation*}
\eqref{ass:lowerRn} and \eqref{ass:upperRn} are fulfilled with $Y = \bigcup_{n\in\mathbb{N}} V_n$ 
and the framework of \eqref{prob:perturbed} applies. 
\end{ex}

\vspace{.2cm}

From Theorem \ref{prop:G-conv}, the perturbations defined in the above examples  are stable with respect to \eqref{prob:abstract}
provided the density result \eqref{eqn:abstract_density} holds. 
Moreover, the density property \eqref{eqn:abstract_density} is also a necessary condition for
the stability of perturbation schemes in the following sense: Firstly, note that the $\Gamma$-limit 
of the approximation schemes defined in \Cref{ex:Tikhonov} and \Cref{ex:Galerkin} can be  
calculated using similar arguments as in the proof of \Cref{prop:G-conv}. 
Under the same conditions on $V$, namely that $V$ is reflexive or with separable dual, one infers that $F+i_{\overline{\K\cap Y}}$ is the weak and 
strong $\Gamma$-limit in both examples. 
Secondly, in the approaches of 
\Cref{ex:TikhonovMY} and \Cref{ex:GalerkinMY}, \Cref{prop:G-conv} guarantees that $F+i_{\K}$
is obtained as the weak-strong $\Gamma$-limit for \textit{any} coupling of 
regularization (parameter) pairs $[\gamma_n,\gamma_n']$ and $[V_n,\gamma_n]$, respectively. Further, in the combined Galerkin-Moreau-Yosida approach 
(\Cref{ex:GalerkinMY}), it is possible to 
prove the existence of a combination of $n$ and $\gamma_n$ to recover $F+i_{\K}$
in the $\Gamma$-limit without resorting to the density property \eqref{eqn:abstract_density}, 
see \cite[Prop. 2.46]{mr15}. However, the proof is non-constructive! Hence, it is not 
applicable for the design of solvers. Moreover, 
if \eqref{eqn:abstract_density} is violated, one may construct for any $w\in \K\setminus \overline{\K\cap Y}$
a sequence $\gamma_n$ such that no recovery sequence exists for the element $w$. The analogous statement is valid for the case of combined Moreau-Yosida-Tikhonov regularizations. Let us now rigorously establish the preceding statements. 

\begin{theorem}\label{prop:no_recovery}
 Let the assumptions of \textnormal{\Cref{ex:GalerkinMY}} be satisfied. Further suppose that 
 \begin{equation*}
 \overline{\K\cap Y}\subsetneq \K.	
 \end{equation*}
Then for each $w\in \K\setminus \overline{\K\cap Y}$ there exists a strictly increasing sequence of numbers
 $\{\gamma_n\}$ with $\gamma_n\to \infty$ such that 
 there exists no strong recovery sequence at $w$, i.e., 
 \begin{equation*}
 	F(y_n) + R_n(y_n) \nrightarrow F(w)
 \end{equation*}
for all $\{y_n\}$ in $V$ with $y_n\to w$, where $R_n$ is given by \eqref{def:GalerkinMY}.
\end{theorem}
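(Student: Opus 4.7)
The plan is to build $\{\gamma_n\}$ adversarially from the ``isolation'' of $w$ from $\K\cap Y$. Since $w\notin\overline{\K\cap Y}$, fix $\epsilon>0$ with $\bar{B}_\epsilon(w)\cap \K\cap Y=\emptyset$, where $\bar{B}_\epsilon(w)$ denotes the closed ball of radius $\epsilon$ around $w$ in $V$. The key auxiliary quantity is
\[
a_n := \inf\bigl\{\, d_\K(v)^2 : v\in V_n,\ \|v-w\|\le \epsilon \,\bigr\},
\]
where $d_\K(v):=\inf_{u\in\K}\|v-u\|$, with the convention $a_n:=+\infty$ if the feasible set is empty. Nestedness $V_n\subset V_{n+1}$ makes $\{a_n\}$ non-increasing.

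First I would establish two properties of $a_n$. \textbf{Positivity:} whenever $a_n<+\infty$ one has $a_n>0$. Indeed, $V_n\cap \bar{B}_\epsilon(w)$ is compact by finite dimensionality of $V_n$, and $d_\K$ is continuous, so the infimum is attained at some $v^*$. If $d_\K(v^*)=0$ then $v^*\in\K$ by closedness of $\K$, giving $v^*\in V_n\cap\K\cap \bar{B}_\epsilon(w)\subset Y\cap\K\cap \bar{B}_\epsilon(w)=\emptyset$, a contradiction. \textbf{Decay:} density of $\bigcup_n V_n$ in $V$ provides $v_n\in V_n$ with $\|v_n-w\|\le 1/n$ for $n$ large, hence $a_n\le d_\K(v_n)^2\le 1/n^2\to 0$. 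With these at hand, define $\{\gamma_n\}$ recursively so that it is strictly increasing and $\gamma_n\ge 1/a_n$ whenever $a_n<\infty$, for instance by setting $\gamma_n:=\max(\gamma_{n-1}+1,\,1/a_n)$ starting from the first $n_0$ with $a_{n_0}<\infty$ and extending arbitrarily for $n<n_0$ while keeping strict monotonicity. Because $a_n\to 0$, one gets $\gamma_n\to+\infty$, and by construction $\gamma_n a_n\ge 1$ for all $n\ge n_0$.

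To finish, suppose toward contradiction that some $\{y_n\}\subset V$ with $y_n\to w$ satisfies $F(y_n)+R_n(y_n)\to F(w)$. Continuity of $F$ forces $R_n(y_n)\to 0$, so $R_n(y_n)<+\infty$ for $n$ large; because $R_n$ contains the indicator $i_{V_n}$ in \eqref{def:GalerkinMY}, this forces $y_n\in V_n$. Since $y_n\to w$, also $\|y_n-w\|\le\epsilon$ eventually, placing $y_n$ in the feasible set of $a_n$ and yielding $d_\K(y_n)^2\ge a_n$. Then
\[
R_n(y_n)=\tfrac{\gamma_n}{2}\,d_\K(y_n)^2 \;\ge\; \tfrac12\,\gamma_n a_n \;\ge\; \tfrac12,
\]
contradicting $R_n(y_n)\to 0$. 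The main obstacle is the positivity $a_n>0$: it is precisely here that finite dimensionality of $V_n$ (compactness of $V_n\cap\bar{B}_\epsilon(w)$) interacts with the strict separation of $w$ from $\K\cap Y$, which is the quantitative content of the hypothesis $\overline{\K\cap Y}\subsetneq\K$; without this rigidity one could drive $d_\K(y_n)^2$ to zero faster than any prescribed rate of $\gamma_n$.
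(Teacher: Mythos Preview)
Your argument is correct and follows the same overall strategy as the paper: produce a positive lower bound for $d_\K$ on the part of $V_n$ near $w$, and choose $\gamma_n$ as (essentially) its reciprocal so that the Moreau--Yosida term cannot vanish along any $V_n$-valued sequence approaching $w$. The execution differs slightly. The paper places the localization on the $\K$ side, bounding $\mathrm{dist}(V_n,\K\cap\overline{B_\rho(w)})$ from below via weak compactness of the bounded closed convex set $\K\cap\overline{B_\rho(w)}$ together with weak closedness of the finite-dimensional $V_n$; in the final step it then needs to argue that the metric projection of $y_n$ onto $\K$ eventually lands in $B_\rho(w)$ so that $d_\K(y_n)=\mathrm{dist}(y_n,\K\cap B_\rho(w))$. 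You instead place the localization on the $V_n$ side via $a_n=\inf\{d_\K(v)^2:v\in V_n\cap\bar B_\epsilon(w)\}$ and invoke strong compactness of the finite-dimensional set $V_n\cap\bar B_\epsilon(w)$ directly. This buys you a cleaner endgame (no projection-localization step) and the more elementary form of compactness; the paper's version, on the other hand, never needs to know that $a_n\to 0$, since strict monotonicity and divergence of $\gamma_n$ can simply be forced after the fact.
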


\begin{proof}
 Let $w\in \K\setminus \overline{\K\cap Y}$ and $\rho > 0$ such that $\overline{B_\rho(w)} \cap \overline{\K\cap Y} = \emptyset$
 where $B_\rho(w):= \{y\in V: \|w-y\| < \rho \}$.

(a) \emph{We first prove the following result:}
\begin{equation}\label{def:g_n}
 \forall n\in\mathbb{N}, \: \exists \gamma_n > 0\:: \: \left[ \:y\in Y  \:\wedge\:  \mathrm{dist}(y,\K\cap \overline{B_{\rho}(w)})^2 < \tfrac{1}{\gamma_n} \quad \Longrightarrow \quad y\notin V_n \right]. 
\end{equation}
Assume the opposite, i.e.,
\begin{equation*}
 \exists n_0\in \mathbb{N }\quad :\quad \left[ \forall n\in \mathbb{N}, \exists w_n\in V_{n_0}, v_n \in (\K\cap \overline{B_{\rho}(w)})\quad : \quad \|w_n - v_n\|^2 \le \tfrac{1}{n} \right]. 
\end{equation*}
Since $v_n \in \overline{B_{\rho}(w)}\cap \K$ for all $n\in \mathbb{N}$ and $\overline{B_{\rho}(w)}\cap \K$ 
is convex, bounded and closed, there exists a subsequence $\{v_{n_k}\}$ of $\{v_n\}$ with  $v_{n_k}\rightharpoonup v$ and $v\in \overline{B_{\rho}(w)}\cap \K$. As $w_n-v_n \to 0$, one also obtains
$w_{n_k}\rightharpoonup v$ and thus $v\in V_{n_0}$. 
Hence, $v\in V_{n_0} \cap \K \cap \overline{B_{\rho}(w)} = \emptyset$, which is a contradiction. 

(b) \emph{Non-existence of a strong recovery sequence:} Choose $\gamma_n$ according to \eqref{def:g_n} and suppose there exists a recovery sequence $y_n$ to $w$, i.e., $y_n\to w$ and $F(y_n) + \frac{\gamma_n}{2} \mathrm{dist}(y_n,\K)^2 + i_{V_n}(y_n) \to F(w)$.
The continuity of $F$ implies that $y_n\in V_n$ and $\frac{\gamma_n}{2} \mathrm{dist}(y_n,\K)^2 \to 0$. 
Consequently, using $y_n\to w$ and $w\in \K$, there exists $n_1 \in \mathbb{N}$ such that  
\begin{equation*}
\mathrm{dist}(y_n,\K)^2 = \mathrm{dist}(y_n, \K \cap B_{\rho}(w))^2 \le \tfrac{1}{\gamma_n} 	
\end{equation*}
for all $n\ge n_1$. With the help of part $(a)$, we conclude that $y_n \notin V_n$ for all $n\ge n_1$  which is a contradiction.
\end{proof}

}

\subsection{Applications to Finite Element approximations}\label{sec:FE}

{ We now concentrate efforts in how the previously described ideas permeate through their finite dimensional approximation. In this section we assume that $\Omega\subset \R^\mathrm{d}$ is a Lipschitz polyhedral domain. We start with a small generalization of item \eqref{itm:1} in Definition \ref{definition:MoscoConvergence} for the finite dimensional case. In some textbooks on finite-dimensional approximations of variational inequalities, cf., e.g. Glowinski~\cite{g82},
Han and Reddy~\cite{hr13},
condition \eqref{itm:1} is commonly replaced by the following criterion: 
\vspace{.2cm}
\begin{enumerate}[\upshape(i)]
	\item \label{eqn:Mosco2} There exists a dense subset $\tilde{ \K} \subset \K$ and an operator 
$r_n:\tilde{\K} \to V$,    such that for all  $v\in\tilde{\K}$  it holds $r_n v \to v $ in $V$  and there exists  $n_0 \in \mathbb{N}$ such that $r_nv \in \K_n$  for all $n\ge n_0$. 
\end{enumerate}

It is easy to show that \eqref{eqn:Mosco2} implies \eqref{itm:1} in Definition \ref{definition:MoscoConvergence}. In fact,
let $v\in \K$ and denote by $ \pi_{ \K_n}v$ its (not necessarily uniquely determined) projection onto $\K_n$.
By density, for $\epsilon > 0$, there exists $v^\epsilon \in\tilde{ \K}$ such that $ ||v^\epsilon - v||\le \epsilon $.
Thus, we have 
 \begin{align*}
 	 || v - \pi_{ \K_n}v || & = \inf_{v^n\in \K_n} || v - v^n || \le || v - r_nv^\epsilon || \le \epsilon + || v^\epsilon - r_nv^\epsilon|| 
 \end{align*}
for sufficiently large $n$ such that $\lim_{n\to\infty }||v - \pi_{ \K_n}v || \le \epsilon $ where $\epsilon$ was arbitrary. 

Condition \eqref{eqn:Mosco2} is more convenient
 in the context of finite-dimensional approximations, where $r_n$ 
is given by interpolation operators that are only defined on a dense subset of $V$. Thus,
giving rise to sets $\tilde{ \K}$ of the type $\K\cap C_c^\infty(\Omega)$. 
in fact, 
this is precisely where the 
density results of the previous sections are required. 
For practical relevance, we consider the perturbation of
variational inequalities. 

In what follows, the sequence of approximating sets is assumed to be originating from a finite-dimensional
approximation $\K_n=\K_{h_n}$ of the set $\K$ in the framework of classical Finite Element methods: The parameter $n$ is associated with a sequence of mesh sizes $\{h_n\}$ 
converging to zero. Concerning the literature and in the context of approximation of variational inequalities,  
Falk~\cite{f74}'s a priori estimate for elliptic variational inequalities  
shows that it is sufficient to tailor the sets $\K_n$ with respect to 
the VI solution $u$: This gives rise to the class of adaptive Finite Elements methods. 
Rigorous convergence proofs for adaptive discretizations of 
variational inequalities are restricted to special cases and usually require strong assumptions. See for example, in the case of the obstacle problem with a piecewise
affine obstacle, the article Siebert and Veeser~\cite{sv07}. Furthermore, 
density results may still be useful in the analysis of
adaptive schemes utilizing interpolation operators, 
cf. Siebert~\cite{s11}.

Consider  a sequence of geometrically conformal affine simplicial meshes  
$\{\mathcal{T}_h\}_{h>0}$ of $\Omega$  of mesh size $h$, i.e.,  
\begin{equation*}
h:=\max_{T\in\mathcal{ T}_h} \diam\: (T),
\end{equation*}
where $\diam(T)=\max_{x,y\in T} |x-y|$ denotes the diameter of $T$. We call $\mathcal{ T}_h$, a triangulation of $\Omega$.
The Lebesgue measure of an element $T\in\mathcal{ T}_h$ is denoted by $\lambda(T)$.
We further assume that the sequence 
$\{ \mathcal{ T}_h\}$ is shape-regular, that is
\begin{equation}\label{def:shapereg}
\exists c>0 :\; \tfrac{\diam(T)}{ \rho_T} \le c \quad \forall h,\quad \forall T\in \mathcal{ T}_h,
\end{equation}
where $\rho_T$ is the diameter of the 
largest ball that is contained in $T$. Additionally, we write $x_T$ for the (barycentric) midpoint of an element $T$, and  
$\mathcal{M}_h=\{x_T:T\in\mathcal{T}_h\}$, $\mathcal{N}_h$ and $\mathcal{ E}_h$
for the set of element midpoints, triangulation nodes, and edges with respect to $\mathcal{ T}_h$, respectively. 
Abusing notation, we write $|\mathcal{M}_h|$ and $|\mathcal{N}_h|$ for the cardinality of the respective sets.
Let $\chi_T:\Omega\to\mathbb{R}$ be 
the characteristic function of $T$: 
\begin{equation*}
\chi_T(x) =  0 \quad \forall x\notin T, \quad \text{and} \quad \chi_T(x) =  1 \quad \forall x\in T.
\end{equation*}  
The standard $H^1(\Omega)$-conformal
Finite Element space of globally continuous piecewise affine functions associated to $\mathcal{ T}_h$ is given by
\begin{equation*}
H_h := \{ u \in C(\overline\Omega): u\vert_T \in \mathbb{P}_1 \;\: \forall\: T\in\mathcal{T}_h\}.
\end{equation*}
Here, $\mathbb{P}_1$ denotes the space of polynomials of degree less than or equal to one. Associated to $H_h$ and its standard nodal basis $\{\phi_x: x\in \mathcal{ N}_h\}$, 
we define the global interpolation operator
\begin{equation}\label{def:I_h}
I_h:C(\overline\Omega) \to H_h, \quad I_hu := \sum_{x\in\m N_h} u(x)\phi_x. 
\end{equation}
Note that $I_h$ is only defined on a dense subspace of $H^1(\Omega)$.

We define the Hilbert space $H(\diver, \Omega):=\{v\in L^2(\Omega)^N: \diver v \in L^2(\Omega)\}$ endowed with the inner product
\begin{equation*}
(v,w)_{H(\diver)}:=(v,w)_{L^2(\Omega)^N}+(\diver v,\diver w)_{L^2(\Omega)}.
\end{equation*}
The closure of $C_c^\infty(\Omega)^N$ with respect to the $H(\diver, \Omega)$-norm is denoted by  $H_0(\diver,\Omega)$ and in the case $\Omega$ has a Lipschitz boundary it is equivalent to
\begin{equation}\label{eq:}
H_0(\diver,\Omega)=\{v\in H(\diver,\Omega):\quad \gamma v:= v\cdot{\nu}|_{\partial \Omega}=0\},
\end{equation}
where ${\nu}$ denotes the outer normal vector. The operator $\gamma$ can be proven to be continuous from $H(\diver,\Omega)$ to $H^{1/2}(\partial \Omega)$. For the discretization of variational
problems in $\hdiv$, it is usual to consider the $\hdiv$-conforming space of Raviart-Thomas Finite Elements of lowest order:
\begin{equation}\label{def:RT}
RT_h = \{ w\in L^2(\Omega)^\mathrm{d}: w\vert_T \in \mathbb{RT} \:\:\: \forall T\in\m T_h,\:\:\: [w\cdot\nu]\vert_E = 0 \:\:\:\forall E\in \m E_h\cap\Omega \},
\end{equation}
where $\mathbb{RT} = \{ w\in \mathbb{P}_1^\mathrm{d} : \exists a\in\mathbb{R}^\mathrm{d},b\in\mathbb{R}, \text{ for which } w(x) = a + bx\}$ and $\nu$ denotes the unit 
outer normal to $T$. The incorporation of zero boundary conditions in the normal direction requires the use of the $H_0(\mathrm{div},\Omega)$-conforming subspace 
\begin{equation*}
RT_{0,h} := RT_h \cap H_0(\mathrm{div},\Omega).
\end{equation*} 
Suitable edge-based basis functions $\{\phi_E: E\in\m E_h\}$ 
can be found in the literature, cf., 
for instance, Bahriawati and Carstensen~\cite{cb05}.
Finally, the global Raviart-Thomas interpolation operator is given by 
\begin{equation}\label{def:IRT}
I_h^{ RT}:  W^{1,1}(\Omega)^\mathrm{d}\to RT_h, \quad I^{ RT}_h w := \sum_{E\in\m E_h}\left( \int_E w\cdot \nu \dif \mathcal{H}^{\mathrm{d}-1} \right) \; \phi_E. 
\end{equation} 

We are now in shape to present the pertinent Mosco convergence results associated to finite element discretizations.
\begin{thm}\label{thm:Mosco1}
 Suppose that  $\alpha\in C(\overline\Omega)$ satisfies $\inf_{x\in \Omega}\alpha(x)>0$, and that $N\in\mathbb{N}$ is given.  
 Then the sets 
 \begin{align*}
    \K^1_h&: = \{w\in (H_h)^N: |w(x_T)|_{\ell^p} \le \alpha(x_T) \text{ for all } T\in\m T_h \},\\
     \K^2_h &:= \{ w\in (H_h)^N : |w(x)|_{\ell^p}\le \alpha(x) \text{ for all }  x\in \mathcal N_h \},
 \end{align*}
 for $1\leq p \leq +\infty$, Mosco-converge for $h\to 0$ to the set 
 \begin{equation*}
 	\K=\{w\in H^1(\Omega)^N: |w|_{\ell^p}\leq \alpha\}
 \end{equation*}
in $H^1(\Omega)$.
\end{thm}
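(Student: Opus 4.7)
The strategy is to verify Mosco's two conditions \eqref{itm:1} and \eqref{itm:2} separately for each of the sequences $\{\K^1_h\}$ and $\{\K^2_h\}$. Condition \eqref{itm:2} will follow from elementwise convexity arguments close in spirit to Proposition \ref{iiMosco}, while \eqref{itm:1} requires a density-plus-interpolation construction hinging on the strict positivity $\inf_{x\in\Omega}\alpha(x)>0$.

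For \eqref{itm:2}, let $w_h\in\K^i_h$ with $w_h\rightharpoonup w$ in $H^1(\Omega)^N$. By Rellich--Kondrachov, $w_h\to w$ strongly in $L^2(\Omega)^N$, and Lipschitzness of $|\cdot|_{\ell^p}$ gives $|w_h|_{\ell^p}\to|w|_{\ell^p}$ in $L^2(\Omega)$. In the case $\K^2_h$, writing each affine $w_h|_T$ through barycentric coordinates as $w_h(x)=\sum_i\lambda_i(x)w_h(x_i)$ and applying convexity of $|\cdot|_{\ell^p}$ yields the pointwise bound $|w_h(x)|_{\ell^p}\le I_h\alpha(x)$, where $I_h\alpha$ is the nodal interpolant of $\alpha$; uniform continuity of $\alpha$ on $\overline\Omega$ gives $I_h\alpha\to\alpha$ in $L^\infty(\Omega)$, and passing to the limit (e.g.\ by testing against nonnegative $\zeta\in L^2$) yields $|w|_{\ell^p}\le\alpha$ a.e. For $\K^1_h$, the identity $w_h(x_T)=\tfrac{1}{\lambda(T)}\int_T w_h$ for affine $w_h|_T$ turns the midpoint constraint into an average inequality; introducing the piecewise-constant surrogates $\bar w_h:=\sum_T w_h(x_T)\chi_T$ and $\phi_h:=\sum_T\alpha(x_T)\chi_T$, one obtains $|\bar w_h|_{\ell^p}\le\phi_h$ pointwise and $\phi_h\to\alpha$ in $L^\infty$. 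A Poincar\'e--Wirtinger estimate on each shape-regular simplex delivers $\|w_h-\bar w_h\|_{L^2(\Omega)}\le Ch\|\nabla w_h\|_{L^2(\Omega)}\to 0$, so $\bar w_h\to w$ in $L^2$ and, passing to a subsequence that converges pointwise a.e., the limit satisfies $|w|_{\ell^p}\le\alpha$ a.e.

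For \eqref{itm:1}, given $w\in\K$, I would build the recovery sequence in two stages. First, combining the scaling $w\mapsto(1-\delta)w$, which shrinks the bound to $(1-\delta)\alpha$, with a Meyers--Serrin-type partition-of-unity mollification (inward translation followed by convolution, tailored to the Lipschitz polyhedral $\Omega$), I produce for each $\delta\in(0,1)$ a function $v_\delta\in C^\infty(\overline\Omega)^N$ with $v_\delta\to w$ in $H^1(\Omega)^N$ as $\delta\to 0^+$ and with a uniform strict gap $|v_\delta|_{\ell^p}\le\alpha-\eta_\delta$ pointwise, for some $\eta_\delta>0$. Second, the nodal interpolant $I_h v_\delta\in(H_h)^N$ coincides with $v_\delta$ at the nodes and so belongs to $\K^2_h$ for every $h$; for $\K^1_h$ the barycentric identity $I_h v_\delta(x_T)=\tfrac{1}{\mathrm{d}+1}\sum_i v_\delta(x_i)$ together with convexity yields $|I_h v_\delta(x_T)|_{\ell^p}\le\tfrac{1}{\mathrm{d}+1}\sum_i(\alpha(x_i)-\eta_\delta)\le\alpha(x_T)$ once $h$ is small enough that the oscillation of $\alpha$ over each simplex falls below $\eta_\delta/2$. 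The standard interpolation estimate $\|I_h v_\delta-v_\delta\|_{H^1}\le Ch|v_\delta|_{H^2}$ then permits a diagonal selection $h=h(\delta)\to 0$ producing $w_{h_n}:=I_{h_n}v_{\delta_n}\in\K^i_{h_n}$ with $w_{h_n}\to w$ in $H^1$.

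The hard part will be the production of $v_\delta$: the scaling opens a margin of order $\delta\inf_\Omega\alpha$, while mollification---needed both for smoothness and for the $H^2$-based interpolation estimate---inflates the bound on $|v_\delta|_{\ell^p}$ by a quantity controlled by the modulus of continuity of $\alpha$ at the mollification scale, and these two effects must be balanced so that a uniform gap $\eta_\delta>0$ survives. On a merely Lipschitz polyhedral $\Omega$, carrying out mollification near $\partial\Omega$ forces a partition of unity with boundary-adapted inward translations in the Meyers--Serrin spirit, and this is where the coercivity assumption $\inf\alpha>0$ is used essentially; in the $\K^1_h$ case the same gap $\eta_\delta$ must also absorb the pointwise defect $I_h\alpha-\alpha$ as $h\to 0$, which is what makes the interplay between scaling, mollification, and uniform continuity of $\alpha$ the delicate point of the argument.
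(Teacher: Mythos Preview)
Your proof is correct and, for condition \eqref{itm:1}, follows the same architecture as the paper: build a dense class of smooth functions satisfying the constraint with a \emph{strict} uniform gap, then use the nodal interpolation operator and the standard $L^\infty$ error estimate to produce the recovery sequence. The paper simply cites a reference (Hinterm\"uller--Rautenberg) for the density of $\K\cap C_c^\infty(\Omega)^N$ in $\K$ and then works with the set $\tilde\K=\{\phi\in C^\infty(\overline\Omega)^N:|\phi|_{\ell^p}<\alpha\}$, whereas you sketch the density construction explicitly via scaling plus Meyers--Serrin mollification; the balancing of the $\delta\inf_\Omega\alpha$ margin against the modulus of continuity of $\alpha$ that you flag is exactly the content behind that citation.

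For condition \eqref{itm:2} your route is genuinely different. The paper uses a duality argument: it identifies $i_\K$ as the Fenchel conjugate of $j(v)=\int_\Omega\alpha|v|_{\ell^q}$, tests against $v\in C_c(\Omega)$, replaces $\alpha$ and $v$ by their piecewise-constant midpoint interpolants, and exploits the midpoint quadrature rule on each simplex to pass to the limit. You instead invoke Rellich--Kondrachov to upgrade weak $H^1$ convergence to strong $L^2$ convergence and then argue pointwise, for $\K^2_h$ via the convexity bound $|w_h|_{\ell^p}\le I_h\alpha$ and for $\K^1_h$ via the piecewise-constant surrogate $\bar w_h$ and a Poincar\'e--Wirtinger estimate. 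Both arguments are valid on the bounded Lipschitz polyhedron assumed in this section; yours is more elementary and avoids conjugate-duality machinery, while the paper's approach does not rely on compact embedding and would adapt more readily to settings where Rellich fails.
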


\begin{proof}
We concentrate on $\K^1_h$ as the proof for $\K^2_h$ follows analogously, and we separate the proof into two steps.

\emph{Step 1: We prove first that \eqref{itm:2} in Definition \eqref{definition:MoscoConvergence} holds true.} That is, suppose  $w_h\in \K^1_h$ and $w_h\rightharpoonup w$ in $H^1(\Omega)$ along a subsequence, then we prove that $w\in \K$.  
 It suffices to show that $i_{\K}(w)=0$.
  Furthermore, it holds $i_{\K} = j^\ast$ where $j^\ast$ denotes the 
 Fenchel-Legendre conjugate 
 \begin{equation*}
 	  j^\ast(v^\ast) := \sup_{v \in L^2(\Omega)}\{(v^\ast,v) - j(v)\},
 \end{equation*}
 of the map $j:L^2(\Omega) \to \R,$ defined as 
 \begin{equation*}
 	j(v):= \int_\Omega \alpha|v|_{\ell^q} \dif x,
 \end{equation*}
with $1/p+1/q=1$ , and where we use the duality relation between ${\ell^p}$ and ${\ell^q}$ norms, i.e.,
\begin{equation*}
	 |v^\ast|_{\ell^p} = \sup_{v\in\R\setminus\{0\}} v^\ast \cdot v / |v|_{\ell^q}.
\end{equation*}

 From the definition of $j^\ast$, we obtain that
 $i_{\K}(w)=0$ is equivalent to
 \begin{equation}\label{est:proof1}
 	(w,v) \le \int_\Omega \alpha |v|_{\ell^q} \quad\fa v\in L^2(\Omega). 
 \end{equation}
 Via density, it is enough
 to prove this result for all $v \in C_c(\Omega)$. 
Define
  \begin{equation}\label{def:pwcInterpolate}
  	\alpha_h:= \sum_{T\in\m T_h}\alpha(x_T)\chi_T, \quad \text{and} \quad v_h:= \sum_{T\in\m T_h}v(x_T)\chi_T 
  \end{equation}
the piecewise constant interpolants of $\alpha$ and $v$, respectively. Since $\alpha$ and $v$ are uniformly continuous, then
 $\alpha_h\to \alpha$ and $v_h\to v$ in $L^\infty(\Omega)$. {By the weak convergence of $w_h$, and the strong convergence of
 $\alpha_h$ and $v_h$, we have
  \begin{align}
 \int_\Omega w_h \cdot v_h \dif x &\xrightarrow{ h\downarrow 0}  \int_\Omega w\cdot v \dif x ,\label{eq:con1}\\ 
  \int_\Omega \alpha_h |v_h|_{\ell^q} \dif x  &\xrightarrow{ h\downarrow 0} \int_\Omega \alpha |v|_{\ell^q} \dif x .\label{eq:con2}
 \end{align}}
 { Further, by the midpoint quadrature rule, and that $|w_h(x_T)|_{\ell^p} \le \alpha(x_T)$, we observe that 
 \begin{align*}
 \int_\Omega w_h \cdot v_h \dif x & = \sum_{T\in \m T_h} \int_T w_h \cdot v_h\dif x \nonumber\\ 
 & = \sum_{T\in \m T_h} \lambda(T)\, w_h(x_T) \cdot v_h\vert_T \dif x \label{eqn:midpointformula}\\
  & \leq \sum_{T\in \m T_h} \lambda(T)\, |w_h(x_T)|_{\ell^p} |v_h\vert_T|_{\ell^q} \dif x \\ 
 & \le \sum_{T\in \m T_h} \lambda(T)\; \alpha(x_T) \; |v_h\vert_T |_{\ell^q} \dif x \nonumber \\
 & = \int_\Omega \alpha_h |v_h|_{\ell^q} \dif x  \nonumber,	
 \end{align*}}
which {by \eqref{eq:con1} and \eqref{eq:con2}} proves \eqref{est:proof1}. 

\emph{Step 2: We prove  that \eqref{itm:1} in Definition \eqref{definition:MoscoConvergence} holds true.} Note that  the assumptions on $\alpha$ imply that
\begin{equation*}
	\overline{\K\cap C_c^\infty(\Omega)^N}^{H^1(\Omega)^N}=\K,
\end{equation*}
{that is, the set $\K\cap C_c^\infty(\Omega)^N$ is dense, with respect to the $H^1(\Omega)^N$-norm, in $\K$;} see Hinterm{\"u}ller and Rautenberg~\cite{MR3306389}. { This further  implies} that the set 
\begin{equation}
\label{def:Ktilde}
 \tilde{\K} := \{ \phi\in C^\infty(\overline\Omega)^N: |\phi(x)|_{\ell^p}<\alpha(x) \text{ for all } x\in \Omega \},
\end{equation}
is also dense in $\K$ w.r.t. the $H^1(\Omega)^N$-norm. 
 For the global interpolation operator $I_h$ defined in \eqref{def:I_h} we have the classical estimate, 
 \begin{equation}
 	\label{est:interpolation}
|| u - I_h u ||_{ L^\infty(\Omega)} \le c h^2 || u ||_{ W^{2,\infty}(\Omega)} \quad\fa u\in W^{2,\infty}(\Omega).
 \end{equation}
Here, $c$ denotes a constant independent of $h$ on account of the shape-regularity of the triangulation 
\eqref{def:shapereg}; see Ern and Guermond~\cite{eg04}.
 
We set $r_h:\tilde{\K} \to (H_h)^N$ to be defined by $r_h w = \{I_h w_i\}_{i=1}^N$ and it follows
that $r_hw\to w$ as $h\to 0$ in $H^1(\Omega)^N$ for all $w\in \tilde{ \K}$. Hence,
\begin{equation}
	\label{est:proof2}
||\, |w - r_h w|_{\ell^p}\, ||_{ L^\infty(\Omega)} \le \tilde{c}h^2 ||w||_{ W^{2,\infty}(\Omega)^N},
\end{equation}
for some $\tilde{c}>0$, which implies
\begin{equation}
	\label{est:proof3}
|r_h w(x)|_{\ell^p} \le |w(x)|_{\ell^p} + ch^2 ||w||_{ W^{2,\infty}(\Omega)^N} \quad \fa x\in\Omega. 
\end{equation}
Thus for $w\in \tilde{\K}$, there exists $h_0=h_0(w)$ such that $r_h w \in \K^1_h$ for all  $h\le h_0$ which implies \eqref{eqn:Mosco2}.
\end{proof}

The role of density properties can also be seen in the following result involving other kinds of constraints. The proof carries over mutandis mutatis from the previous proof. 

\begin{thm}\label{thm:Mosco2}
 Let  $1\leq p \leq +\infty$, and  assume that $\alpha\in C(\overline\Omega)$ satisfies $\inf_{x\in \Omega}\alpha(x)>0$.  
 Then the set 
 \begin{align*}
    &\K^i_h: = \{ w \in H_h: |\nabla w\vert_T|_{\ell^p} \le \alpha(x_T) \;\text{for all } T\in\m T_h\},
 \end{align*}
 Mosco-converges for $h\to 0$ to the set
 \begin{align*}
 	&\K^i=\{w\in H^1(\Omega): |\nabla w|_{\ell^p}\leq \alpha\},
 \end{align*}
in $H^1(\Omega)$. Further, the sets 
 \begin{align*}
     & \K^{ii}_h :=\{ w\in RT_{0,h}: |w(x_T)|_{\ell^p} \le \alpha(x_T) \;\text{for all } T\in \mathcal{T}_h\}, \\
     & \K^{iii}_h :=\{ w\in RT_{0,h}: |\mathrm{div}\: w\vert_T| \le \alpha(x_T) \;\text{for all }  T\in \m T_h\},
 \end{align*}
Mosco-converge for $h\to 0$ to the sets 
 \begin{align*}
 	&\K^{ii}=\{w\in H_0(\mathrm{div},\Omega): | w|_{\ell^p}\leq \alpha\},\\
 	 &	\K^{iii}=\{w\in H_0(\mathrm{div},\Omega): | \mathrm{div}\: w|\leq \alpha\},
 \end{align*}
in $H_0(\mathrm{div},\Omega)$.\qed
\end{thm}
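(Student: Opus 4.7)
The plan is to adapt the proof of Theorem \ref{thm:Mosco1} to each of the three cases by exploiting a unified structural observation: in every case, the quantity subject to the pointwise constraint---namely $\nabla w_h$ for $\K^i_h$, the field $w_h$ itself for $\K^{ii}_h$, and $\diver w_h$ for $\K^{iii}_h$---is a piecewise polynomial on $\mathcal{T}_h$ of degree at most one, so that when paired elementwise with a piecewise-constant interpolant of a continuous test function, the midpoint (centroid) quadrature rule is exact on each simplex.

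For condition \eqref{itm:2} of Definition \ref{definition:MoscoConvergence}, I would reproduce Step 1 of the proof of Theorem \ref{thm:Mosco1} essentially verbatim in each case. For $\K^i_h$: if $w_h \rightharpoonup w$ in $H^1(\Omega)$ then $\nabla w_h \rightharpoonup \nabla w$ in $L^2(\Omega)^{\mathrm d}$, and since $\nabla w_h$ is elementwise constant, the midpoint quadrature argument of the previous proof applied to $\nabla w_h$ in place of $w_h$, combined with the duality between $|\cdot|_{\ell^p}$ and $|\cdot|_{\ell^q}$, yields the Fenchel-type bound
\begin{equation*}
\int_\Omega \nabla w\cdot v \,\dif x \le \int_\Omega \alpha\, |v|_{\ell^q}\,\dif x \quad \text{for all } v\in C_c(\Omega)^{\mathrm d},
\end{equation*}
which is equivalent to $|\nabla w|_{\ell^p} \le \alpha$ by the duality characterization used to arrive at \eqref{est:proof1}. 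For $\K^{ii}_h$, each $w_h\in RT_{0,h}$ is affine on every $T$, hence $w_h\cdot v_h$ is affine on $T$ for $v_h$ piecewise constant, and the midpoint rule is again exact, giving the corresponding bound for $w$. For $\K^{iii}_h$, $\diver w_h$ is elementwise constant and the argument is applied to $\diver w_h$ paired with a scalar test function, yielding $|\diver w| \le \alpha$.

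For condition \eqref{itm:1}, in the form of \eqref{eqn:Mosco2}, I would take as dense subsets the strictly feasible smooth sets
\begin{equation*}
\tilde{\K}^i := \{\phi\in C^\infty(\overline\Omega): |\nabla \phi|_{\ell^p} < \alpha\text{ on }\Omega\},
\end{equation*}
and the analogues $\tilde{\K}^{ii}$, $\tilde{\K}^{iii}$, incorporating zero normal trace on $\partial\Omega$ where applicable, and use $r_h = I_h$ in the $H_h$-case and $r_h = I_h^{RT}$ in the $RT_{0,h}$-cases. For $\K^i_h$ the estimate \eqref{est:interpolation} applied componentwise gives $|\nabla(I_h w)(x_T)|_{\ell^p} \le |\nabla w(x_T)|_{\ell^p} + Ch\|w\|_{W^{2,\infty}}$ on each $T$, and uniform strict feasibility $\alpha - |\nabla w|_{\ell^p}\ge \delta>0$ on the compact set $\overline\Omega$ forces $r_h w \in \K^i_h$ for $h$ small. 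For $\K^{ii}_h$ and $\K^{iii}_h$, the analogous $O(h)$ bounds on $|I_h^{RT}w(x_T)|_{\ell^p}$ and on $|\diver(I_h^{RT}w)(x_T)|$ follow from standard Raviart--Thomas interpolation estimates; in particular, for $\K^{iii}_h$ I would invoke the commuting diagram property $\diver\circ I_h^{RT} = P_h \circ \diver$, with $P_h$ the $L^2$-projection onto elementwise constants, so that $\diver(I_h^{RT}w)$ restricted to $T$ is precisely the average of $\diver w$ over $T$ and is therefore $O(h)$-close to $\diver w(x_T)$ for smooth $w$.

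The step I expect to be genuinely delicate is the density of $\tilde{\K}^{iii}$ in $\K^{iii}$: approximating an arbitrary $w\in \K^{iii}$ in the $H(\diver,\Omega)$-norm by a smooth vector field with zero normal trace and with $|\diver w|$ strictly below $\alpha$ is more involved than the pointwise-obstacle density results already referenced, because the regularization must act compatibly on the divergence while preserving the boundary condition. I anticipate handling this by a rescaling $w_\tau := \tau w$ with $\tau\uparrow 1$ to create a strict gap (as in the proof of Proposition \ref{iMosco}), followed by a mollification with a boundary correction to restore the zero normal trace, thereby reducing matters to an adaptation of the density techniques of Hinterm{\"u}ller and Rautenberg~\cite{MR3023771,MR3306389}. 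The remaining steps are, as indicated, the corresponding portions of the proof of Theorem \ref{thm:Mosco1}.
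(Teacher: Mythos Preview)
Your proposal is correct and takes essentially the same approach as the paper: the paper's own ``proof'' of this theorem consists solely of the remark that ``the proof carries over \emph{mutatis mutandis} from the previous proof,'' i.e., from Theorem~\ref{thm:Mosco1}, which is precisely the adaptation you outline. In fact your sketch is more detailed than what the paper provides, and your identification of the density of $\tilde{\K}^{iii}$ in $\K^{iii}$ as the genuinely delicate point, together with the use of the commuting diagram $\diver\circ I_h^{RT}=P_h\circ\diver$, are appropriate refinements that the paper leaves implicit.
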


}

\section{Quasi-variational inequalities}\label{sec:QVIs}

The structure \eqref{eq:KVI} of $\K$ is adapted to VIs, where the convex set $\K$ is part of the fixed data.  However, 
to treat Quasi-Variational Inequalities (QVIs) we need to consider the convex set as unknown a-priori.  Therefore, instead of a convex $\K$ we have a map $\K:V\to 2^V$ written as 
$v\mapsto \mathbf{K}(v)$ with the following difference with respect to  
\eqref{eq:KVI}, the function $\phi$ is contingent upon the state $y$ as well.  Indeed, in the unilateral case, there is an operator $\Phi$ such that  
$\Phi(v):\Omega \to\mathbb{R}$ is measurable function for each $v\in V$, and 
\begin{equation}\label{eq:KQVI}
    \mathbf{K}(v):=\{w\in V: w \leq \Phi(v) \}.
\end{equation}
Thus, if $\K(\cdot)$ is as above and $f\in V'$ is given, then
\begin{equation}\label{eq:QVI}\tag{$\mathrm{\mathrm{P}_{QVI}}$}
\text{{Find} } y\in \mathbf{K}(y): \langle Ay-f,v-y\rangle \geq 0, \quad \forall v\in \mathbf{K}(y),
\end{equation}
is referred to as a QVI. This kind of problems arose initially from the work of Bensoussan and Lions \cite{Bensoussan1974,Lions1973} (see also the monographs \cite{Ben1982,Bensoussan1984}) on \emph{impulse control problems}, and later found application modeling a wide variety of non-convex and non-smooth phenomena in applied sciences. Specifically, areas including superconductivity (Kunze and Rodrigues~\cite{Rodrigues2000},
Rodrigues and Santos~\cite{MR1765540,MR2947539},
Barrett and Prigozhin~\cite{MR2652615,MR3335194},
Prigozhin~\cite{Prigozhin},
Hinterm{\"u}ller and Rautenberg~\cite{MR3648950,MR3119319,MR3023771,hintermuller2019dissipative}), continuum mechanics (Friedman~\cite{Friedman1982}), growth of sandpiles
(Barrett and Prigozhin~\cite{MR3082292,MR3231973,MR3335194},
Prigozhin~\cite{Prigozhin1986,Prigozhin1994,Prigozhin1996}), and the determination of rivers/lakes networks 
(Barrett and Prigozhin~\cite{MR3231973},
Prigozhin~\cite{Prigozhin1994,Prigozhin1996}), among others. For a complete and classical account on QVIs, we refer the reader to the text of Baiocchi and Capelo \cite{BaiC1984}.

\subsection{Impulse Control Problems and QVIs}\label{sec:impulse}

Because this is an application in stochastic control problems, the proper description of the impulse control involves diffusion processes as the state of the system to be controlled, and a complete setting can be found, for instance, in most of the quoted references below.

In the simplest case, impulse control (or control by interventions) refers to a sequential choices of 
parameters that modify the free evolution of the system, e.g., beginning at time $\theta_0=0$ and 
a state $x_0\in\R^d$, the state $\{x(t):t\ge\theta_0\}$ system is allowed to evolve with a running cost given by 
$f(x(t))e^{-\alpha t}$  (assuming $a_0(x)=\alpha$, constant) until a time 
$\theta_1$, where the controller 
intervenes and changes  the state and/or evolution of the system, e.g., if the current state 
is $x(\theta_1)$ then immediately, the state is moved to the state 
$x(\theta_1)+\xi_1$, $\xi_1\ge0$ and the evolution continues with a similar law.
For instance, in finance, the state $x(t)$ may represent the inventory at time $t$ and $\xi_1$ 
the order placed at time $t=\theta_1$.   Iterating this, a control policy $\{(\theta_i,\xi_i):i\ge1\}$ 
is obtained, and the control problem could be properly defined. 

In the context described above and subsequently, the dynamic programming is applied to obtain the so-called Hamilton-Jacobi-Bellman equation, which takes the form of a QVI. In particular, if $y$ is the optimal cost, then at any given time the controller has to decide whether to 
continue the (free) evolution, i.e., following the equation $Ay=f$, or to make an impulse (intervention), 
which has a cost (and changes $y$ into $My$).  This can be accounted as
\[
  \textrm{(a) }\;Ay\le f,\qquad \textrm{(b) }\;y\le My,\qquad \textrm{(c) }\;(Ay-f)(y-My)=0,
\]
where  $A$ is a second order elliptic operator with Lipschitz continuous and bounded 
coefficients in a smooth domain $\Omega$ of $\mathbb{R}^d$, i.e.,
\[
  Ay=-\sum_{i,j=1}^d a_{ij}(x)\partial_{ij}y(x) +\sum_{i=1}^da_i(x)\partial_i y(x)+a_0(x)y(x),
\]
where $(a_{ij})$ and $(a_i)$ are related with the diffusion and drift terms and the operator $M$ takes the form
\[
  My(x)=\inf\big\{y(x+\xi)+k(\xi):\xi\ge0\big\},
\]
for a suitable function $k(\xi)\ge k_0>0$ representing the cost-per-impulse. 
Usually, there may be more that one solution of these inequalities, even the \emph{complementary} 
condition (c) is not enough to ensure uniqueness in a general setting.  
Moreover, adding those other conditions, a minimum (minimal or maximal, depending on the setting) solution satisfying (a) and (b) is found. 
In variational form, this is equivalent to \eqref{eq:QVI}, and the perturbation of extremal solutions thereof is treated on \S \ref{sec:Order}.

The expression of the operator $M$ can be modified to deal with more complex settings, e.g., if a fixed time delay 
$\tau$ is imposed (i.e., $\theta_{i+1}\ge\theta_i+\tau$) then
\[
  My(x)=\inf\big\{\mathbb{E}_x\{y(x(\tau)+\xi)+k(\xi)\}:\xi\ge0\big\},
\]
where $\mathbb{E}_x$ is the expectation given $x(0)=x$.  In general, the region $\{\xi\ge 0\}$ may be replaced by a subset $\Gamma(x)\subset\Omega$ depending 
on the given $x$.  Moreover, the whole state space $\Omega$ can be divided into three regions, 
where (1) impulses are not allowed, (2) impulses are allowed, and (3) impulses are required; e.g., (3) is a piece $\Gamma_0$ of the boundary of $\Omega$,  (2) is the interior of $\Omega$ and (1) is 
the complement of $\Gamma_0$ (or empty).  
In this case, the expression of $M$ changes considerable, but some of the essential properties 
(e.g., like its monotone character) are retained.   
This last example is included in the so-called \emph{hybrid models}, where discrete and continuous 
type variables are used, e.g., examples of this situation can be found in 
Bensoussan and Menaldi~\cite{BM1997,BM2000}, as well as particular cases in more recent papers
Menaldi and Robin~\cite{MR2017, MR2018}, among others. For degenerate problems the reader may check \cite{Me1987}, and applications to Navier-Stokes are considered by Menaldi and Sritharan~\cite{MeSr2003}.

A vector form goes under the name of \emph{switching control}, the coefficients of the diffusion  depend on a parameter $i=1,2,\ldots,\bar{n}$, i.e., the operator $A$ becomes $A_i$ and a simple 
expression for $M$ takes the form
\[
  Mv(x,i)=\inf\big\{v(x,j)+k(i,j)\}:j\neq i\big\},\quad k(i,j)\ge 0,
\]
which can be combined with previous forms of $M$.  There is a vast literature on these problems,
as recent books, the reader may consult 
Arapostathis et al.~\cite{ArBM2012}, Yin and Zhu~\cite{YiZ10}, among others; and for instance, a relative complex situations is discussed in Menaldi and Robin~\cite{MR2019}.

\subsection{Elementary Existence Theory}\label{sec:existence}

For the study of existence of solutions, we define the map 
\begin{equation}\label{eq:T}
T(v):=S(f,\K(v)),
\end{equation}
where $S$ is the solution map associated to the variational inequality \eqref{eq:VI}, relative to $f$ and $\K(v)$.
Thus, solutions to  \eqref{eq:QVI} are equivalently defined as fixed points of the map $T$, i.e., $v$ solves \eqref{eq:QVI} iff 
\begin{equation*}
	T(v)=v.
\end{equation*}

A direct approach to determine existence of fixed points is the following. The coercivity of the operator $A$ implies that $T(V)\subset B_R(0; V)$ for some $R>0$. Hence, any sequence $\{v_n\}$ in $B_R(0; V)$ contains a subsequence such that $v_n\rightharpoonup v $ and  $T(v_n)\rightharpoonup z $ in $V$ for some $v$ and $z$. Hence, provided that $\K(v_n)\Mosco\K(v)$ then $T(v_n)\to T(v) $ in $V$, i.e., the map $T$ is compact and a fixed point exists due to the theorem of Schauder. In summary, a sufficient condition for the existence of solutions to \eqref{eq:QVI} is that $v_n\rightharpoonup v $ in $V$ implies that $\K(v_n)\Mosco\K(v)$.

While the above is suitable to understand the problem of existence, it is not enough to understand the behavior of the set of all solutions $\mathbf{Q}(f)$ to \eqref{eq:QVI} with respect to perturbations of $f$. For this, we consider an ordering approach.

\subsection{Exploiting order and cone structure} \label{sec:Order}

We consider an approach based on order that was pioneered by Tartar; see \cite{tartar1974inequations}, \cite[Chapter 15, \S 15.2]{Aubin1979}, and we follow closely a simplified version of \cite{alphonse2019stability,Alphonse_2019}. In particular, we focus on existence and stability properties of the solution set.

Let  $(V,H,V')$ be a Gelfand triple of Hilbert spaces, that is, we have $V\hookrightarrow H \hookrightarrow V'$, where the embedding  $V\hookrightarrow H$ is dense and continuous, and $H$ is identified with its topological dual $H'$ so that the embedding $H \hookrightarrow V'$ is also dense and continuous. Within this section, $(\cdot,\cdot)$ denotes the inner product in $H$. 

We assume that $H^+\subset H$ is a convex cone satisfying
\begin{equation*}
H^+=\{v\in H: (v,y)\geq 0 \text{ for all $y\in H^+$}\}.
\end{equation*}
Note that $H^+$ defines the cone of non-negative elements inducing the vector ordering:
\begin{equation*}
x\leq y \quad\text{ if and only if }\quad y-x\in H^+. 
\end{equation*}
Given $x\in H$, let $x^+$ denote the orthogonal projection of $x$ onto $H^+$, and define $x^-:=x^+-x$. Clearly, one has the decomposition $x=x^+-x^-\in H^+-H^+$ for every $x\in H$, and $(x^+,x^-)=0$. Further, the infimum and supremum of two elements $x,y\in H$ are defined as $\sup (x,y):=x+(y-x)^+$ and $\inf (x,y):=x-(x-y)^+$, respectively. The supremum of an arbitrary {completely ordered subset $R$} of $H$ that is bounded (in the order) above is also properly defined:  $R$ can be written as $\{x_i\}_{i\in J}$, where $J$ is completely ordered, { and it follows that} $\{x_i\}_{i\in J}$ is a generalized Cauchy sequence in $H$ 
(e.g., see Aubin~\cite[Chapter 15, \S15.2, Proposition 1]{Aubin1979}); 
its limit is the upper bound of the original set.  {Additionally, we have that} that norm convergence preserves order, i.e., if { $z_n\to z$ and $y_n\to y$ in $H$, then $z_n\leq y_n$ ($y_n-z_n\in H^+$) implies $z\leq y$, since $H^+$ is closed}. 

We further assume that 
\begin{equation*}
y\in V \Rightarrow y^+\in V \qquad \text{and} \qquad \exists \mu>0:  \|y^+\|_V\leq \mu \|y\|_V, \:\:\forall y\in V.
\end{equation*}
{Then the order} in $H$ induces one in $V'$, {as well}. {In fact, for}  $f,g\in V'$, we write $f\leq g$ if $\langle f,\phi\rangle\leq \langle g,\phi\rangle$ for all ${\phi}\in V^+:=V\cap H^+$.

Finally, $V$ and $H$  are assumed to be spaces of maps $h:\Omega\to \mathbb{R}$ over some open set $\Omega\subset\mathbb{R}^N$ with the following dense and continuous embedding: $L^\infty(\Omega)\hookrightarrow H$ such that $L^\infty(\Omega)\hookrightarrow V'$, as well. Additionally, we assume that $H\hookrightarrow L^1(\Omega)$.

A common example of Gelfand triple $(V,H,V')$ and cone $H^+$ that satisfies all conditions is given by  $(V,H,V')=(H_0^{1}(\Omega),L^2(\Omega), H^{-1}(\Omega))$ with $H^+=L^2(\Omega)^+$, the set of almost everywhere (a.e.) non-negative functions, and $v\leq w$ in the a.e. sense.

\subsubsection{Minimal and Maximal Solutions} We start this section with the definition of an increasing map, and existence of fixed points thereof under rather weak conditions. Subsequently, we provide conditions for the map $T$ to be increasing.
{
\begin{definition}
A map  $R\colon H\to H$ is said to be increasing if for $y,z\in V$ we have that
\begin{equation*}
y\leq z \quad \text{implies} \quad R(y)\leq R(z).
\end{equation*}
\end{definition}}

A general result {concerning} existence of fixed points for increasing maps is  available as we see next (its proof can be found on \cite{Aubin1979}). This provides a fundamental tool to prove existence of solutions to problem \eqref{eq:QVI} under very weak assumptions.

\begin{thm}[\textsc{Birkhoff-Tartar}]\label{thm:BirkhoffTartar} 
Suppose $R:H\rightarrow H$ is an increasing map and let $\underline{y}$ be a sub-solution and $\overline{y}$ be a  super-solution of the map $R$, that is:
\begin{equation*}
\underline{y}\leq R(\underline{y}) \quad \text{ and } \quad R(\overline{y})\leq \overline{y}.
\end{equation*}
If  $\underline{y}\leq  \overline{y}$, then the set of fixed points of the map $R$ in the interval $[\underline{y}, \overline{y}]$ is non-empty and has a smallest and a largest element.\qed
\end{thm}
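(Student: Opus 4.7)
The plan is to construct the smallest and largest fixed points by transfinite iteration of $R$ starting from $\underline{y}$ (ascending) and $\overline{y}$ (descending), respectively. For the smallest fixed point, I would define by transfinite recursion $u_0 := \underline{y}$, $u_{\alpha+1} := R(u_\alpha)$, and $u_\lambda := \sup_{\beta < \lambda} u_\beta$ at limit ordinals $\lambda$. The latter supremum is well-defined in $H$ precisely because the excerpt guarantees that every completely ordered, order-bounded subset of $H$ admits a supremum.

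The core of the argument is a transfinite induction on $\alpha$ establishing the three simultaneous statements: (a) $\underline{y} \leq u_\alpha \leq \overline{y}$; (b) $\beta \leq \alpha$ implies $u_\beta \leq u_\alpha$; and (c) $u_\alpha \leq R(u_\alpha)$. The base case $\alpha = 0$ is exactly the sub-solution hypothesis. At a successor $\alpha + 1$, monotonicity of $R$ gives $u_{\alpha+1} = R(u_\alpha) \leq R(\overline{y}) \leq \overline{y}$ and $u_{\alpha+1} \geq R(\underline{y}) \geq \underline{y}$, while (c) at $\alpha$ yields $u_\alpha \leq u_{\alpha+1}$, and then applying $R$ once more gives (c) at $\alpha + 1$. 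At a limit $\lambda$, clauses (a) and (b) in the induction hypothesis ensure the chain $\{u_\beta : \beta < \lambda\}$ is totally ordered and bounded above by $\overline{y}$, so $u_\lambda$ exists; closedness of $H^+$ (norm convergence preserves order, as noted in the excerpt) then gives $\underline{y} \leq u_\lambda \leq \overline{y}$, and for each $\beta < \lambda$ the estimate $u_{\beta+1} = R(u_\beta) \leq R(u_\lambda)$ passes to the supremum to yield $u_\lambda \leq R(u_\lambda)$.

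A standard cardinality argument now forces stabilization: the assignment $\alpha \mapsto u_\alpha$ cannot be strictly increasing on the class of all ordinals, for this would inject a proper class into the set $H$. Hence there exists $\alpha^{\ast}$ with $u_{\alpha^{\ast}+1} = u_{\alpha^{\ast}}$, so $y_\ast := u_{\alpha^{\ast}}$ is a fixed point of $R$ lying in $[\underline{y}, \overline{y}]$. Minimality among fixed points follows by another transfinite induction: if $w$ is any fixed point in the interval, then $u_0 = \underline{y} \leq w$; the successor step uses $R(u_\alpha) \leq R(w) = w$; and the limit step passes through the supremum using once more that order is preserved under limits.

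The greatest fixed point is produced by the order-dual construction: $v_0 := \overline{y}$, $v_{\alpha+1} := R(v_\alpha)$, and $v_\lambda := \inf_{\beta < \lambda} v_\beta$ at limits, where the infima are supplied by applying the supremum hypothesis to the dual order. The entire proof is essentially bookkeeping once the transfinite framework is in place; the one delicate step is verifying that the limit-stage iterate still satisfies $u_\lambda \leq R(u_\lambda)$, which is precisely where one must invoke both the monotonicity of $R$ pointwise along the chain and the closedness of the cone $H^+$ under (generalized) limits. All remaining steps are routine applications of the defining properties of $R$, $\underline{y}$, and $\overline{y}$.
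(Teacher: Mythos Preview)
The paper does not actually prove this theorem: it states the result, marks it with \qed, and refers the reader to Aubin~\cite{Aubin1979} (Chapter~15, \S15.2) for the argument. So there is no in-paper proof to compare against.

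Your transfinite-iteration construction is correct and is essentially the classical route (the one you would find in Aubin). The key structural ingredient you need---that every totally ordered, order-bounded subset of $H$ has a supremum realized as a norm limit---is exactly what the paper records just before the theorem, so you are using precisely the hypotheses available. One cosmetic point: at a limit ordinal $\lambda$, the cleanest way to obtain $u_\lambda \le R(u_\lambda)$ is to note that for each $\beta<\lambda$ the induction hypothesis (c) gives $u_\beta \le R(u_\beta) \le R(u_\lambda)$ directly, and then pass to the supremum; your phrasing via $u_{\beta+1}$ requires the extra (true but unstated) observation that successor ordinals are cofinal in $\lambda$. Otherwise the argument, including the cardinality stabilization step and the minimality/maximality verification, is sound.
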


The above theorem mainly {states} that if a map is increasing, has a subsolution $y_1$ and a supersolution $y_2$, then it has a fixed point between (with respect to the order induced in $H$) $y_1$ and $y_2$. {Moreover}, there are minimal and maximal fixed points in $[y_1, y_2]$.

For the map $T\colon H\to H$ defined as $T(v)=S(f,\K(v))$ to be increasing, some assumptions are required on the structure of $\K$ and on the operator $A$. {For this purpose}, in addition to $A:V\to V'$ {satisfying \eqref{eq:AssA} ( i.e., $A$ is linear, continuous, and strongly monotone), we assume it is strictly T-monotone, i.e.,
\begin{equation}\label{A3}
\langle A y^ -, y^+ \rangle\leq 0, \qquad \forall y\in V.
\end{equation}}
Further, we assume that 
\begin{equation}
	\Phi\colon H\to H\cap [\nu,+\infty) \quad \text{ is increasing},
\end{equation}
for some $\nu>0$, and {that} $0\leq f\leq f_{\mathrm{max}}$ for some $ f_{\mathrm{max}}\in V'$.  Then, it follows that
\begin{equation*}
\underline{y}=A^{-1}0=0 \qquad \text{and} \qquad \overline{y}=A^{-1}f_{\mathrm{max}}
\end{equation*}
are sub- and supersolutions, respectively, of $T$, and all assumptions of the previous theorem are satisfied: In fact, we have that 
\begin{equation*}
	(f,v)\mapsto S(f,\K(v)) \qquad \text{ is increasing},
\end{equation*}
see 
Rodrigues~\cite[Section 4:5, Theorem 5.1]{Rodrigues1987}. Hence, defining 
$\mathbf{A}_{\mathrm{ad}}=\{g\in V': 0\leq g\leq f_{\mathrm{max}}\}$, we have the operators 
\begin{equation*}
\mathsf{m}\colon \mathbf{A}_{\mathrm{ad}}\to V \qquad \text{and} \qquad \mathsf{M}\colon \mathbf{A}_{\mathrm{ad}}\to V
\end{equation*}
that take elements of $\mathbf{A}_{\mathrm{ad}}$ to minimal and maximal solutions to \eqref{eq:QVI} {in} the interval $[\underline{y},\overline{y}]=[0,A^{-1}f_{\mathrm{max}}].$
 
\subsubsection{A class of QVIs } 

Consider the following class of compliant obstacle problems where the obstacle is given implicitly by solving a PDE, thus coupling a VI and a PDE. It consists in finding $(y,\Phi,z)\in V\times H\times W$ such that 
\begin{align}
 &y \leq \Phi, \quad \langle A(y)-f, y-v \rangle \leq 0, 
   \qquad \forall v \in V: v \leq \Phi,\label{eq:QVIforu}\\
 &\langle Bz+G(\Phi,y)-g, w\rangle=0 
   \qquad \forall w\in W,\label{eq:pdeForT}\\
 &\Phi=Lz, \qquad\text{in } H.\label{eq:mould}
\end{align}
Here, $V\hookrightarrow W\hookrightarrow H \hookrightarrow W'\hookrightarrow V'$, $f,g \in H^+$, $G:H\times H\to H$ is continuous and bounded, i.e., for some $M_G>0$, $\|G(\Phi,y)\|_H\leq M_G (\|\Phi\|_H+\|y\|_H)$, for all $(\Phi,y)\in H\times H$. Further, $L\colon W \to H$ is an increasing {affine} linear continuous map { with $L(0)\geq\nu>0$}. Additionally,  $B\in \mathcal{L} (W,W')$ is coercive and satisfies $\langle Bz^-,z^+\rangle= 0$ for all $z\in W$ { (i.e., $B$ is T-monotone).}

Under mild conditions, the above problem can be cast into the form of \eqref{eq:QVI} as follows. Let $v\in H$, and consider the problem of finding $z\in W$ such that
\begin{align}
  &\langle Bz+G(\phi,v)-g, w\rangle=0, \quad\forall w\in W,\label{eq:mh1}\\
  &\phi = Lz, \quad\text{in } H.\label{eq:mh2}
\end{align}
Assuming that for each $v\in H$, $z\mapsto G(Lz,v)$ is monotone, one can show the existence of a unique solution $z(v)\in W$ of \eqref{eq:mh1}--\eqref{eq:mh2}. Now set $\Phi(v):=\phi$.
Suppose additionally that $(G(Lz,y),z^-)\leq 0$ for all $z\in W$ and $y\in  H^+$. {Hence,} $z(v)\geq 0$ and $\Phi(v)=Lz(v)\geq {\nu}$ for all  $v\in H$. {In addition, if $v_1\leq v_2$ implies} 
\begin{equation*}
(G(Lv,v_1)-G(Lw,v_2),(v-w)^+){\geq} 0,
\end{equation*}
for all $w,v$, then $z(v_1)\leq z(v_2)$ and $\Phi(v_1)\leq \Phi(v_2)$,
as $L$ is increasing. This finally shows that \eqref{eq:QVIforu}--\eqref{eq:mould} has the form \eqref{eq:QVI} {with $\Phi$ as an increasing operator} and $\mathbf{K}(v)$ given as
$\mathbf{K}(v):=\{v'\in V:v'\leq\Phi(v)\}$, where $\Phi(v)=\phi\in W$, 
and the pair $(z,\phi)$ is as given by \eqref{eq:mh1}--\eqref{eq:mh2}.

 Finally, assuming that $f\in\mathbf{A}_{\mathrm{ad}}=\{h\in V': 0\leq h\leq f_{\mathrm{max}}\}$, we have that the operators $\mathsf{m}\colon \mathbf{A}_{\mathrm{ad}}\to V$and $\mathsf{M}\colon \mathbf{A}_{\mathrm{ad}}\to V$ are well-defined: They map elements in $\mathbf{A}_{\mathrm{ad}}$ to minimal and maximal solutions to \eqref{eq:QVI} {in} the interval $[\underline{y},\overline{y}]=[0,A^{-1}f_{\mathrm{max}}].$

\subsubsection{A useful Mosco convergence result}

The obstacle operator $\Phi$ arising from \eqref{eq:QVIforu}-\eqref{eq:mould}, can be written as $\Phi(v)=C^{-1}(Lv)+\tilde{g}$ where $C$ is a (nonlinear) partial differential operator, and $\tilde{g}$ is some fixed element in $V$. In particular, this generates the need to consider Mosco convergence results when obstacles have specific structure. In this vein, we consider the following result.

\begin{theorem}\label{thm:MoscoQ}
	Let $\phi_n,\phi\in V$ for $n\in \mathbb{N}$. Suppose that $\phi_n\rightarrow \phi$ in $H$, and 
	\begin{equation*}
		\mathcal{Q} \phi_n\geq 0 \quad \text{ in } \: V \quad \text{ for all } \:n\in \mathbb{N},
	\end{equation*}
for some strongly monotone $\mathcal{Q}\in \mathscr{L}(V,V')$, such that $\langle \mathcal{Q}v^-, v^+\rangle\leq 0$ for all $v\in V$. Then,
	 \begin{equation*}
	 	\{w\in V: w \leq \phi_n \} \Mosco\{w\in V: w \leq \phi \} .
	 \end{equation*}
holds true.	 
\end{theorem}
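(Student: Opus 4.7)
I verify the two conditions of Definition~\ref{definition:MoscoConvergence} for the sets $\K_n = \{w\in V: w \leq \phi_n\}$ and $\K = \{w \in V : w \leq \phi\}$. The harder direction will be (I); for (II) a direct weak-closure argument suffices.

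\textbf{Condition (II).} Suppose $w_n \in \K_n$ and $w_n \rightharpoonup w$ in $V$ along a subsequence. Because $V \hookrightarrow H$ continuously, $w_n \rightharpoonup w$ in $H$, and since $\phi_n \to \phi$ strongly in $H$ we obtain $\phi_n - w_n \rightharpoonup \phi - w$ in $H$. As $\phi_n - w_n \in H^+$ for every $n$ and $H^+$ is closed and convex in $H$ (hence weakly closed), the limit satisfies $\phi - w \in H^+$, i.e.\ $w \leq \phi$. Alternatively, this is a direct application of Proposition~\ref{iiMosco} using $H \hookrightarrow L^1(\Omega)$ together with $G = \mathrm{id}$, $\psi(x)=x$.

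\textbf{Condition (I).} Given $w \in V$ with $w \leq \phi$, I construct a recovery sequence via a $\mathcal{Q}$-metric projection: let $w_n \in V$ be the unique solution of
\begin{equation*}
w_n \leq \phi_n, \qquad \langle \mathcal{Q}(w_n - w), v - w_n \rangle_{V',V} \geq 0 \quad \forall\: v \in V,\; v \leq \phi_n.
\end{equation*}
Existence and uniqueness are ensured by the strong monotonicity of $\mathcal{Q}$ and the non-emptiness of the constraint set (it contains $\phi_n$). Testing with the admissible competitor $v = \min(w_n, w) \in \K_n$ yields $\langle \mathcal{Q}(w_n - w), (w_n - w)^+ \rangle \leq 0$; splitting $u = u^+ - u^-$ and using strong monotonicity of $\mathcal{Q}$ on $u^+$ together with T-monotonicity on $u^-$ gives $\langle \mathcal{Q} u, u^+ \rangle \geq \alpha \|u^+\|_V^2$, forcing $(w_n - w)^+ = 0$, hence $w_n \leq w$. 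A second admissible test $v = \min(w, \phi_n) \in \K_n$ then produces the key estimate
\begin{equation*}
\alpha \|w - w_n\|_V^2 \leq \langle \mathcal{Q}(w - w_n), (w - \phi_n)^+ \rangle_{V', V},
\end{equation*}
and since $w \leq \phi$ we have the pointwise comparison $(w - \phi_n)^+ \leq (\phi - \phi_n)^+$.

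\textbf{Main obstacle.} The crux is closing this estimate: the right-hand side is a $V'$-$V$ pairing in which the second factor converges to zero only in $H$, so the naive duality bound $\|\mathcal{Q}\| \|w - w_n\|_V \|(\phi - \phi_n)^+\|_V$ is circular. This is precisely where the hypothesis $\mathcal{Q}\phi_n \geq 0$ must be invoked. The idea is that $\phi_n$ being a $\mathcal{Q}$-supersolution permits a Lewy-Stampacchia-type bound to construct an element $\zeta_n \in V$ with $\zeta_n \geq (\phi - \phi_n)^+$ and $\|\zeta_n\|_V \leq C\|\phi - \phi_n\|_H$ — effectively transferring the $H$-regularity of the data into $V$-control of its positive-part lifting. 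Substituting $\zeta_n$ for $(\phi - \phi_n)^+$ on the right-hand side of the key estimate, together with $w - w_n \geq 0$, yields $\|w - w_n\|_V \to 0$, establishing (I). Without the $\mathcal{Q}\phi_n \geq 0$ assumption this lifting is unavailable and the recovery step breaks down, so this step is both the technical heart of the argument and the structural justification for the hypothesis.
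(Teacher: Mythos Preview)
Your verification of condition~(II) and the first half of~(I) is fine: the $\mathcal{Q}$-projection $w_n$ exists, the test $v=\min(w_n,w)$ gives $w_n\le w$, and the test $v=\min(w,\phi_n)$ yields
\[
\alpha\|w-w_n\|_V^2 \le \big\langle \mathcal{Q}(w-w_n),\,(w-\phi_n)^+\big\rangle.
\]
You are also right that the VI forces $\mathcal{Q}(w-w_n)\ge 0$ in $V'$, so a monotone substitution of $(w-\phi_n)^+$ by a larger $\zeta_n$ would be legitimate.

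The gap is the existence of that $\zeta_n$. The assertion ``there is $\zeta_n\in V$ with $\zeta_n\ge(\phi-\phi_n)^+$ and $\|\zeta_n\|_V\le C\|\phi-\phi_n\|_H$'' is not a Lewy--Stampacchia statement; it is a uniform $H\to V$ lifting of nonnegative data, which is generally impossible since $\|\cdot\|_V$ is strictly stronger than $\|\cdot\|_H$. Lewy--Stampacchia bounds control the multiplier $\mathcal{Q}(w-w_n)$ in $V'$, not the obstacle defect in $V$, and the hypothesis $\mathcal{Q}\phi_n\ge 0$ says nothing about $\phi-\phi_n$ (indeed $\phi$ need not satisfy $\mathcal{Q}\phi\ge 0$, and $\{\phi_n\}$ need not even be bounded in $V$). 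Notice, too, that up to this point your argument has not actually used $\mathcal{Q}\phi_n\ge 0$ anywhere; invoking it only through an unproven lifting is a sign that the hypothesis has not yet been genuinely exploited.

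The paper avoids the projection entirely and uses a singular perturbation: for $\tilde w_n:=\min(w,\phi_n)$ and $r_n:=\|\tilde w_n-w\|_H$, define $w_n\in V$ by
\[
\langle r_n\,\mathcal{Q}w_n + w_n,\,v\rangle = (\tilde w_n,v)\qquad\forall\,v\in V.
\]
Here $\mathcal{Q}\phi_n\ge 0$ is used where it actually bites: testing with $(w_n-\phi_n)^+$ gives $w_n\le\phi_n$ directly. Convergence $w_n\to w$ in $V$ then follows from energy estimates because the scaling $r_n=\|\tilde w_n-w\|_H\to 0$ exactly balances the $H$-size of the forcing against the $\mathcal{Q}$-coercivity, converting $H$-convergence of the data into $V$-convergence of the approximants. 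This balancing is the mechanism your projection approach lacks.
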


\begin{proof}
	First note that since $H \hookrightarrow L^1(\Omega)$, then $\phi_n\to \phi$ in $H$ also implies strong convergence in $L^1(\Omega)$. It follows  by Proposition \ref{iiMosco} that \eqref{itm:2} in Definition \ref{definition:MoscoConvergence} holds true. In order to prove  \eqref{itm:1} in Definition \ref{definition:MoscoConvergence} we consider the following construction based on singular perturbations.
	
	Let $w\in V$ such that $w\leq \phi$ be arbitrary and let $w_n$ for $n\in \mathbb{N}$ be defined by
\begin{equation}\label{eq:SingPert}
\langle r_n\mathcal{Q}w_n+w_n, v\rangle = ( \tilde{w}_n, v),\text{ for all } v\in V,
\end{equation}
where $r_n:=\|\tilde{w}_n- w\|_H$ and $\tilde{w}_n:=\min(w, \phi_n)$, and note that $\tilde{w}_n\rightarrow w$ in $H$ and $w\in V$. Then, we can prove that $w_n\to w$ in $V$. Since  $\mathcal{Q}$ is linear, bounded, and $\langle \mathcal{Q}v, v\rangle\geq c\|v\|_V^2$ for all $v\in V$, from the definition of $w_n$ we have
\begin{align}\notag
	r_nc\|w_n-w\|_V^2+\|w_n-w\|_H^2&\leq \left\langle (r_n\mathcal{Q}+I)(w_n-w), w_n-w\right\rangle\\\label{eq:usesing}
	&\leq \langle \tilde{w}_n-w, w_n-w\rangle- r_n\langle\mathcal{Q}w, w_n-w\rangle\\\notag
	&\leq  r_n(C_p+\|\mathcal{Q}w\|_{V'})\| w_n-w\|_V,
\end{align}
where $C_p$ is the constant for the embedding $V \hookrightarrow H$. This implies that, $\{w_n\}$ is bounded in $V$, so that $w_n\rightharpoonup w^*$ (along a subsequence) for some $w^*\in V$. By taking the limit in \eqref{eq:SingPert}, it is shown that $w^*=w$ and that $w_n\rightharpoonup w$ in $V$ not only along a subsequence. It further follows that $w_n\to w$ in $H$, and since from \eqref{eq:usesing} we observe
\begin{align}
	r_nc\|w_n-w\|_V^2+\|w_n-w\|_H^2
	&\leq  r_n(\|w_n-w\|_H+\langle\mathcal{Q}w, w-w_n\rangle),
\end{align}
we have that $w_n\to w$ in $V$.

Next we prove that $w_n\leq \phi_n$. Consider $v=(w_n-\phi_n)^+$ and 
let us subtract $\left\langle r_n\mathcal{Q}\phi_n+\phi_n, v\right\rangle$ 
from both sides of \eqref{eq:SingPert}. Then, we get
\begin{align*}
 & r_n\left\langle\mathcal{Q}(w_n-\phi_n), (w_n-\phi_n)^+\right\rangle +\|(w_n-\phi_n)^+\|^2_H=\\
 &\quad -r_n\left\langle \mathcal{Q}\phi_n, (w_n-\phi_n)^+\right\rangle+( \min(w, \phi_n)-\phi_n, (w_n-\phi_n)^+).
\end{align*}
Note that $\min(w, \phi_n)-\phi_n\leq 0$ and by assumption $\mathcal{Q}\phi_n\geq 0$. Therefore the right hand side is less or equal to zero. Additionally, since $\mathcal{Q}$ is linear, $\langle \mathcal{Q}v^-, v^+\rangle\leq 0$, and $\langle \mathcal{Q}v, v\rangle\geq c\|v\|_V^2$ for all $v\in V$, we observe that
\begin{align*}
&r_n c\|(w_n-\phi_n)^+\|_V^2 +\|(w_n-\phi_n)^+\|^2_H\leq\\
&\qquad\le r_n\left\langle\mathcal{Q}(w_n-\phi_n)^+, (w_n-\phi_n)^+\right\rangle +\|(w_n-\phi_n)^+\|^2_H\leq 0.
\end{align*}
This yields $w_n\leq \phi_n$, i.e., \eqref{itm:1} in Definition \ref{definition:MoscoConvergence} holds true. This completes the proof.
\end{proof}

In view of the previous result, we assume throughout the rest of  this section the following continuity assumption on the obstacle map $\Phi$.

\vspace{.1cm}

\begin{assumption}\label{PhiAss}
If $v_n\rightharpoonup v$ in $V$, then $\Phi\colon H\to H\cap [\nu,+\infty)$ satisfies one of the following conditions:
\begin{itemize}
  \item[$\mathrm{(a)}$] $\Phi(v_n)\rightarrow \Phi(v)$ in $L^\infty(\Omega)$, or $\Phi(v_n)\rightarrow \Phi(v)$ in $V$.
 \item[$\mathrm{(a)}$] $\Phi(v_n)\rightarrow \Phi(v)$ in $H$ and if $v\in V\cap H^+$, then $\Phi(v)\in V$ and $\mathcal{Q} \Phi(v)\geq 0$ in $V$, for some strongly monotone $\mathcal{Q}\in \mathscr{L}(V,V')$, such that $\langle \mathcal{Q}v^-, v^+\rangle\leq 0$ for all $v\in V$.
\end{itemize}  
\end{assumption}

\vspace{.1cm}

Hence, by Proposition \ref{iMosco}, and Theorem \ref{thm:MoscoQ}, we assume that $\Phi$ satisfies conditions to guarantee Mosco convergence of the sets $\K(v_n)$, provided that $v_n\rightharpoonup v$ in $V$, to $\K(v)$.

\subsubsection{Perturbation of minimal and maximal solutions}

Existence of solutions to the QVI of interest is established if the following property holds 
 \begin{equation*}
 	v_n\rightharpoonup v \quad \text{ implies that } \quad  \K(v_n)\Mosco\K (v).
 \end{equation*}
 However, we are interested in the stability properties of the maps
\begin{equation*}
\mathbf{A}_{\mathrm{ad}}\ni f\mapsto \mathsf{m}(f) \qquad \text{and} \qquad \mathbf{A}_{\mathrm{ad}}\ni f\mapsto \mathsf{M}(f),
\end{equation*}
where
\begin{equation*}
	\mathbf{A}_{\mathrm{ad}}=\{g\in V': 0\leq g\leq f_{\mathrm{max}}\},
\end{equation*}
and hence additional assumptions are needed. In what follows, we establish our fundamental result concerning the behavior of the maps $f\mapsto \mathsf{m}(f)$ and $f\mapsto \mathsf{M}(f)$. As in the previous section we assume  that $[\underline{y},\overline{y}]=[0, A^{-1}f_{\mathrm{max}}]$.

\begin{thm}\label{thm:StabilityMinMax}
Let $\{f_n\}\subset L^\infty(\Omega)\cap \mathbf{A}_{\mathrm{ad}}$ be such that $f_n\geq c$ for some constant $c>0$, and
$\lim_{n\to\infty} f_n=f^*$ in $L^\infty(\Omega)$. 
Suppose that the upper bound mapping $\Phi$ satisfies Assumption \ref{PhiAss} { (page \pageref{PhiAss})},  and that 
\begin{equation}\label{eq:sqrt}
	\lambda  \Phi(y)\geq \Phi(\lambda  y) \quad \text{ for any } \quad \lambda>1,\, y\in H.
\end{equation}
Then the assertions
\begin{itemize}
\item[$\mathrm{(i)}$] { The sequence of minimal solutions satisfy } 
					\begin{equation}\label{eq:ConvergenceMin}
    \mathsf{m}(f_n)\rightarrow \mathsf{m}(f^*) \text { in } H, \qquad \text{and} \qquad  \mathsf{m}(f_n)\rightharpoonup \mathsf{m}(f^*) \text { in } V.
\end{equation}
\item[$\mathrm{(ii)}$] { The sequence of maximal solutions satisfy } 
					\begin{equation}\label{eq:ConvergenceMax}
    \mathsf{M}(f_n)\rightarrow \mathsf{M}(f^*) \text { in } H, \qquad 
    \text{and} \qquad  \mathsf{M}(f_n)\rightharpoonup \mathsf{M}(f^*) 
    \text { in } V.
\end{equation}
\end{itemize}
hold true.
\end{thm}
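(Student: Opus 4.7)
The plan is to combine the monotonicity of the maps $f\mapsto \mathsf{m}(f)$ and $f\mapsto \mathsf{M}(f)$ with a scaling comparison derived from the sub-homogeneity \eqref{eq:sqrt} to obtain a tight sandwich on the perturbed solutions. Writing $T_f(v):=S(f,\K(v))$, the fact that $S$ is increasing in both the data and the obstacle implies that the monotone iterates $T_f^k(0)\uparrow \mathsf{m}(f)$ and $T_f^k(\overline{y})\downarrow \mathsf{M}(f)$ preserve the inequality $f\leq g$, so both $\mathsf{m}$ and $\mathsf{M}$ are monotone in $f$. Since $f_n\to f^*$ in $L^\infty(\Omega)$ and $f_n,f^*\geq c>0$, for every $\lambda>1$ there exists $N_\lambda\in\mathbb{N}$ with $\lambda^{-1}f^*\leq f_n\leq \lambda f^*$ for all $n\geq N_\lambda$, and so
\begin{equation*}
\mathsf{m}(\lambda^{-1}f^*)\leq \mathsf{m}(f_n)\leq \mathsf{m}(\lambda f^*),\qquad \mathsf{M}(\lambda^{-1}f^*)\leq \mathsf{M}(f_n)\leq \mathsf{M}(\lambda f^*).
\end{equation*}

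The core of the argument is the sub-homogeneity estimate $\mathsf{m}(\lambda f^*)\leq \lambda\,\mathsf{m}(f^*)$ and its analogue for $\mathsf{M}$. The key observation is the scaling identity $\lambda S(f,\mathbf{C})=S(\lambda f,\lambda\mathbf{C})$, immediate from the linearity of $A$ in \eqref{eq:AssA}. Combining it with the inclusion $\K(\lambda y)=\{w\leq\Phi(\lambda y)\}\subseteq\{w\leq\lambda\Phi(y)\}=\lambda\K(y)$ granted by \eqref{eq:sqrt}, and the comparison principle for VIs, yields
\begin{equation*}
T_{\lambda f^*}(\lambda y)=S(\lambda f^*,\K(\lambda y))\leq S(\lambda f^*,\lambda\K(y))=\lambda\, T_{f^*}(y),\qquad y\in V\cap H^+.
\end{equation*}
Applied inductively along the monotone iterates, this produces $T_{\lambda f^*}^k(0)\leq \lambda\,T_{f^*}^k(0)$ and $T_{\lambda f^*}^k(\lambda\overline{y})\leq \lambda\,T_{f^*}^k(\overline{y})$ for every $k\in\mathbb{N}$. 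Passing to the limit $k\to\infty$ gives $\mathsf{m}(\lambda f^*)\leq \lambda\,\mathsf{m}(f^*)$ directly, while for the maximal case the decreasing limit of $T_{\lambda f^*}^k(\lambda\overline{y})$ is a fixed point of $T_{\lambda f^*}$ bounded by $\lambda\,\mathsf{M}(f^*)$ and dominating $\mathsf{M}(\lambda f^*)$ (the latter being the maximal fixed point in the smaller interval $[0,\overline{y}]$), hence $\mathsf{M}(\lambda f^*)\leq \lambda\,\mathsf{M}(f^*)$. Substituting $g=\lambda^{-1}f^*$ into either inequality produces the matching lower bounds $\mathsf{m}(\lambda^{-1}f^*)\geq \lambda^{-1}\mathsf{m}(f^*)$ and $\mathsf{M}(\lambda^{-1}f^*)\geq \lambda^{-1}\mathsf{M}(f^*)$.

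Combining these bounds with the sandwich gives $\lambda^{-1}\mathsf{m}(f^*)\leq \mathsf{m}(f_n)\leq \lambda\,\mathsf{m}(f^*)$ for $n\geq N_\lambda$, and since the $H$-norm respects the order of $H^+$ (a consequence of the self-duality of the cone), one obtains
\begin{equation*}
\|\mathsf{m}(f_n)-\mathsf{m}(f^*)\|_H\leq (\lambda-\lambda^{-1})\|\mathsf{m}(f^*)\|_H.
\end{equation*}
Letting first $n\to\infty$ and then $\lambda\downarrow 1$ delivers the strong $H$-convergence in \eqref{eq:ConvergenceMin}, and the same reasoning applies to $\mathsf{M}$. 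For the weak $V$-convergence, testing the VI defining $\mathsf{m}(f_n)$ with the admissible $v=0\in\K(\mathsf{m}(f_n))$ and invoking the strong monotonicity of $A$ produces a uniform $V$-bound on $\{\mathsf{m}(f_n)\}$ (and analogously for $\{\mathsf{M}(f_n)\}$); any weak $V$-cluster point must then coincide with the $H$-limit already identified, so the full sequences converge weakly in $V$.

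I expect the main obstacle to be the sub-homogeneity estimate for the maximal solution: the natural supersolution $\lambda\,\mathsf{M}(f^*)$ can exceed $\overline{y}$ and therefore leave the interval in which $\mathsf{M}(\lambda f^*)$ is originally defined. This is circumvented by iterating from $\lambda\overline{y}$, which is itself a supersolution of $T_{\lambda f^*}$ because $A^{-1}(\lambda f^*)\leq \lambda\overline{y}$, and then comparing the resulting fixed point with the maximal one in $[0,\overline{y}]$ via Theorem \ref{thm:BirkhoffTartar}. A secondary technical point is that $\lambda f^*$ must remain in $\mathbf{A}_{\mathrm{ad}}$, which restricts $\lambda$ to a small neighborhood of $1$; this is harmless since only the limit $\lambda\downarrow 1$ is used.
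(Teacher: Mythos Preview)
The paper does not include a proof of Theorem~\ref{thm:StabilityMinMax}; it states the result, adds a brief remark on the role of hypothesis~\eqref{eq:sqrt}, and implicitly defers the argument to the references on which \S\ref{sec:Order} is based. There is therefore no in-paper proof to compare your proposal against.

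That said, your proposal is sound and matches the mechanism the paper's closing paragraph points to. The scaling identity $\lambda S(f,\mathbf{C})=S(\lambda f,\lambda\mathbf{C})$ together with the inclusion $\K(\lambda y)\subseteq\lambda\K(y)$ coming from~\eqref{eq:sqrt} is exactly what turns the sub-homogeneity of $\Phi$ into $\mathsf{m}(\lambda f^*)\le\lambda\,\mathsf{m}(f^*)$, and the multiplicative sandwich $\lambda^{-1}f^*\le f_n\le\lambda f^*$ (available because $f^*\ge c>0$ and $f_n\to f^*$ in $L^\infty(\Omega)$) then forces the $H$-convergence of the extremal solutions directly, without invoking Mosco convergence of $\K(\mathsf{m}(f_n))$ at that stage. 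Your treatment of the supersolution $\lambda\overline{y}$ in the maximal case and of the restriction $\lambda\approx 1$ so that $\lambda f^*\in\mathbf{A}_{\mathrm{ad}}$ is correct.

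One step deserves to be made explicit: the identifications $\lim_k T_f^k(0)=\mathsf{m}(f)$ and $\lim_k T_f^k(\overline{y})=\mathsf{M}(f)$, on which your inductive comparison rests, are not part of the abstract Birkhoff--Tartar statement. The monotone iterates are bounded in $V$ and hence converge in $H$ and weakly in $V$, but to conclude that the limit is a fixed point you need $v_n\rightharpoonup v$ in $V$ to imply $T_f(v_n)\to T_f(v)$ in $V$. This is precisely where Assumption~\ref{PhiAss} enters, via Proposition~\ref{iMosco} or Theorem~\ref{thm:MoscoQ} and the Mosco stability of $S$. You use this implicitly; it should be stated, since the paper's own remark after the theorem stresses that without Assumption~\ref{PhiAss} one only obtains convergence to \emph{some} element of $\mathbf{Q}(f^*)$ rather than to the extremals.
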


A few words are in order concerning the previous result. Note that if $\Phi$ satisfies Assumption \ref{PhiAss} { (page \pageref{PhiAss})}, but not necessarily \eqref{eq:sqrt}. Then, it is possible to prove that $\mathsf{m}(f_n)$ (and $\mathsf{M}(f_n)$) converge to solutions, elements of $\mathbf{Q}(f^*)$, but not necessarily to $\mathsf{m}(f^*)$ (and $\mathsf{M}(f^*)$). Assumption \eqref{eq:sqrt} provides the stability of extremal points. Structurally speaking, if $\Phi$ is a superposition operator, it states that $\Phi(x)\simeq (x^+)^{1/p}$ for some $p$.

\bibliography{DatabaseBibliography}{}
\bibliographystyle{abbrv}

\end{document}